\documentclass[14pt,a4paper]{article}
\usepackage{graphicx}
\usepackage{latexsym}
\usepackage[all]{xy}
\usepackage{amsfonts}
\usepackage{amsthm}
\usepackage{amsmath}
\usepackage{amssymb}




\def\<{{\langle}}
\def\>{{\rangle}}

\def\eps{\epsilon}

\def\note#1{{}}

\def\note#1{}

\def\rend#1#2{{{\rm End}\sb{#1}(#2)}}

\def\beq{\begin{equation}}
\def\eeq{\end{equation}}

\def\id{{I}}

\def\ot{{\otimes}}


\newtheorem{thm}{Theorem}[section]



\headheight0.6in
\headsep24pt
\textheight200mm 
\topmargin-1.5cm
\oddsidemargin 0.5cm
\evensidemargin0.5cm
\textwidth130mm

\def\Label#1{\label{#1}\ifmmode\llap{[#1] }\else 
\marginpar{\smash{\hbox{\tiny [#1]}}}\fi}
\def\Label{\label}

\newtheorem{proposition}{Proposition}[section]

\newtheorem{theorem}[proposition]{Theorem}

\theoremstyle{definition}
\newtheorem{definition}[proposition]{Definition}

\theoremstyle{remark}
\newtheorem{remark}[proposition]{Remark}

\newcounter{c}

\newcommand{\etyk}[1]{\vspace{-2.4mm}$$\begin{equation}\Label{#1}
\addtocounter{c}{1}}
\renewcommand{\]}{\ifnum \value{c}=1 $$\else \end{equation}\fi}
\setcounter{tocdepth}{2}

\parindent=2mm

\begin{document}

\title{LIE ALGEBRAS AND  YANG--BAXTER EQUATIONS}

\bigskip

\author{Florin  F. Nichita\\
Institute of Mathematics "Simion Stoilow" of the Romanian Academy}

\maketitle

\bigskip

\begin{abstract}

At the previous congress (CRM 6), we 
reviewed the constructions of
Yang-Baxter operators from associative algebras,
and presented some (colored) bialgebras and Yang-Baxter systems
related to them.

The current talk deals with Yang-Baxter operators from
$(\mathbb{G},\theta)$-Lie algebras
(structures which unify 
the Lie algebras and Lie superalgebras). Thus, we 
produce solutions for the  
constant and the  spectral-parameter 
Yang-Baxter equations,
Yang-Baxter systems, etc.

Attempting to present the general framework
we review the work of other authors and
we propose problems, applications
and directions of study.

\end{abstract}

\section{Introduction}

Quantum Groups can be identified with quasitriangular Hopf Algebras. This notion is due to Drinfeld,
motivated by developments in mathematical physics. The significance of the quasitriangular condition
is that it gives an explanation for the Yang-Baxter equation (see \cite{LamRad:Int, Kas:Qua, ni}).  
This equation plays a role in Theoretical Physics (\cite{nipo}),
Knot Theory (\cite{mn}),
Quantum Groups (\cite{Nic:sel, np, np0}), etc.

In the next section, we review
the constructions of
Yang-Baxter operators from associative algebras,
the associated bialgebras and some results on Yang-Baxter systems
(from \cite{np0} and \cite{nipo2}).

Section 3
deals with Yang-Baxter operators from
$(\mathbb{G},\theta)$-Lie algebras
(structures which unify 
the Lie algebras and Lie superalgebras). We 
produce solutions for the  
constant and the  spectral-parameter 
Yang-Baxter equations and
Yang-Baxter systems (see \cite{nipo2}).

Finally, we present the general framework,
results of other authors,
and our new results. We discuss about an extension
for the duality between Lie algebras and Lie coalgebras,
Poisson algebras, and the classical Yang-Baxter equation.

In this paper we propose (open) problems, applications
and directions of study.

\section{ Non-linear equations and bialgebras}

This section is 
a survey  
on Yang-Baxter operators from algebra structures
and some related topics: connections to knot theory,
FRT constructions,
coloured Yang-Baxter operators and  Yang-Baxter
systems.

The quantum Yang-Baxter equation (QYBE) 
first appeared in theoretical physics and 
statistical mechanics.
It plays a crucial role in knot theory, in analysis
of integrable systems, in quantum and statistical mechanics and also in 
the
theory of quantum groups. 
In the quantum group theory, the solutions of
the constant QYBE lead to examples of bialgebras
via the $FRT$ construction \cite{frt, Kas:Qua}. 
Non-additive solutions of the two-parameter form of the QYBE are 
referred to as a {\em coloured
Yang-Baxter operators}.
{\em Yang--Baxter systems} (\cite{Hla:alg, 
HlaKun:qua, HlaSno:sol}) emerged from the study of quantum
integrable systems, as generalisations of the QYBE related to 
nonultralocal models.

\subsection{The constant QYBE}

Throughout this paper $ k $ is a field. All tensor products appearing in this paper are defined over $k$.
For $ V $ a $ k$-space, we denote by
$ \   \tau : V \otimes V \rightarrow V \ot V \  $ the twist map defined by $ \tau (v \ot w) = w \ot v $, and by $ I: V \rightarrow V $
the identity map of the space V.

We use the following notations concerning the Yang-Baxter equation.

If $ \  R: V \ot V \rightarrow V \ot V  $
is a $ k$-linear map, then
$ {R^{12}}= R \ot I , {R^{23}}= I \ot R ,
{R^{13}}=(I\ot\tau )(R\ot I)(I\ot \tau ) $.

\begin{definition}
An invertible  $ k$-linear map  $ R : V \ot V \rightarrow V \ot V $
is called a Yang-Baxter
operator if it satisfies the  equation
\begin{equation}  \label{ybeq}
R^{12}  \circ  R^{23}  \circ  R^{12} = R^{23}  \circ  R^{12}  \circ  R^{23}
\end{equation}
\end{definition}
\bigskip
\begin{remark}
The equation (\ref{ybeq}) is usually called the braid equation. It is a
well-known fact that the operator $R$ satisfies (\ref{ybeq}) if and only if
$R\circ \tau  $ satisfies
  the constant QYBE 
(if and only if
$ \tau \circ R $ satisfies
the constant QYBE):
\begin{equation}   \label{ybeq2}
R^{12}  \circ  R^{13}  \circ  R^{23} = R^{23}  \circ  R^{13}  \circ  R^{12}
\end{equation}

\end{remark}
\bigskip
\begin{remark}
(i) An exhaustive list of invertible solutions for (\ref{ybeq2}) in dimension 
2 is given in \cite{hi}.

(ii) Finding all Yang-Baxter operators in dimension greater than 2 is an 
unsolved problem.

\end{remark}

\bigskip

Let $A$ be an associative $k$-algebra, and $ \alpha, \beta, \gamma \in k$. 
We define the
$k$-linear map:\\
$ R^{A}_{\alpha, \beta, \gamma}: A \ot A \rightarrow A \ot A, \ \ 
R^{A}_{\alpha, \beta, \gamma}( a \ot b) = \alpha ab \ot 1 + \beta 1 \ot ab -
\gamma a \ot b $.

\begin{theorem} (S. D\u asc\u alescu and F. F. Nichita, \cite{DasNic:yan})\label{primat}
Let $A$ be an associative 
$k$-algebra with $ \dim A \ge 2$, and $ \alpha, \beta, \gamma \in k$. Then $ R^{A}_{\alpha, \beta, \gamma}$ is a Yang-Baxter operator if and only if one
of the following holds:

(i) $ \alpha = \gamma \ne 0, \ \ \beta \ne 0 $;

(ii) $ \beta = \gamma \ne 0, \ \ \alpha \ne 0 $;

(iii) $ \alpha = \beta = 0, \ \ \gamma \ne 0 $.

If so, we have $ ( R^{A}_{\alpha, \beta, \gamma})^{-1} = 
R^{A}_{\frac{1}{ \beta}, \frac{1}{\alpha}, \frac{1}{\gamma}} $ in cases (i) and
(ii), and $ ( R^{A}_{0, 0, \gamma})^{-1} = 
R^{A}_{0, 0, \frac{1}{\gamma}} $ in case (iii).
\end{theorem}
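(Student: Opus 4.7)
The plan is to verify the braid equation (\ref{ybeq}) by direct computation on a simple tensor $a\otimes b\otimes c\in A^{\otimes 3}$. Write $R=R^A_{\alpha,\beta,\gamma}$. Since $R$ is a sum of three terms, each composition $R^{12}\circ R^{23}\circ R^{12}$ and $R^{23}\circ R^{12}\circ R^{23}$ expands into at most $27$ summands, each of the form $x\otimes y\otimes z$ with the three factors drawn from $\{1,a,b,c,ab,bc,ac,abc,\ldots\}$. Many of these collapse because $1$ is the unit of $A$, and the two sides will then be seen to be polynomial expressions in $\alpha,\beta,\gamma$ with coefficients in such "typed" tensors.

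First I would compute $R^{12}(a\otimes b\otimes c)$ (three terms), then apply $R^{23}$ to each summand (nine terms), then apply $R^{12}$ once more, and dually for the right-hand side. Subtracting, the equation $R^{12}R^{23}R^{12}-R^{23}R^{12}R^{23}=0$ must hold for all $a,b,c\in A$. Because $\dim A\geq 2$, there exists $u\in A$ linearly independent from $1$; specializing $a,b,c$ to various choices in $\{1,u\}$ isolates coefficients and forces a small system of polynomial equations in $\alpha,\beta,\gamma$. I expect the purely $\gamma^3$ and totally symmetric terms to cancel automatically, leaving mixed cubic relations whose content is precisely $(\alpha-\gamma)(\beta-\gamma)=0$ together with auxiliary non-degeneracy conditions that kill the off-diagonal cases except those listed.

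For the converse, I would check each case directly. Case (iii) is immediate since $R=-\gamma I$ satisfies the braid equation for any scalar. For case (i), substitute $\alpha=\gamma$ into the expansion and watch most cross-terms telescope; case (ii) follows by a symmetry argument, namely that the substitution $\alpha\leftrightarrow\beta$ corresponds to conjugation by $\tau^{\otimes 2}$ (or passage to the opposite algebra), and the braid equation is preserved under that symmetry. The invertibility statement is verified by direct composition: one computes $R^A_{\alpha,\beta,\gamma}\circ R^A_{1/\beta,1/\alpha,1/\gamma}$ in cases (i)--(ii), and $R^A_{0,0,\gamma}\circ R^A_{0,0,1/\gamma}$ in case (iii), and checks termwise that the result is $I$.

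The main obstacle is purely bookkeeping: organizing the expansion without dropping or double-counting terms. A useful device is to decompose $R=\alpha\mu_1+\beta\mu_2-\gamma I$, where $\mu_1(a\otimes b)=ab\otimes 1$ and $\mu_2(a\otimes b)=1\otimes ab$, and to exploit the identities $\mu_i^2=\mu_i$ (up to the $k$-action of $1$) and the commutation relations between $\mu_i^{12}$ and $\mu_j^{23}$; this reduces the $27$-term expansion to a manageable collection of monomials in $\alpha,\beta,\gamma$ whose vanishing conditions are exactly (i)--(iii).
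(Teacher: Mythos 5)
The paper itself offers no proof of Theorem \ref{primat}; it is quoted from \cite{DasNic:yan}, so there is no internal argument to measure your proposal against, and it must be judged on its own. Your plan for the ``if'' direction and for the inverse formula is workable: expanding $R^{12}R^{23}R^{12}-R^{23}R^{12}R^{23}$ on $a\otimes b\otimes c$ using the idempotent relations $\mu_i^2=\mu_i$, $\mu_1\mu_2=\mu_1$, $\mu_2\mu_1=\mu_2$ does verify cases (i) and (iii); the conjugation-by-$\tau$ / opposite-algebra symmetry correctly reduces (ii) to (i); and the composite $R^A_{\alpha,\beta,\gamma}\circ R^A_{1/\beta,1/\alpha,1/\gamma}$ really does collapse to the identity precisely because the coefficient of $ab\otimes 1$ in it is $\frac{\alpha-\gamma}{\beta}+1-\frac{\alpha}{\gamma}$ (and symmetrically for $1\otimes ab$), which vanishes when $\alpha=\gamma$ or $\beta=\gamma$.

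The genuine gap is in the ``only if'' direction, which carries all the content of the theorem, and there your predicted answer is wrong: the braid-equation locus cannot be ``precisely $(\alpha-\gamma)(\beta-\gamma)=0$ plus nondegeneracy,'' because case (iii) has $(\alpha-\gamma)(\beta-\gamma)=\gamma^2\neq 0$ --- your own treatment of (iii) in the converse contradicts your predicted forward-direction outcome. The condition you should be aiming for is $\alpha=\gamma$ or $\beta=\gamma$ or $\alpha=\beta=0$, with invertibility then excluding the degenerate strata (if $\gamma=0$ the image of $R$ lies in $A\otimes 1+1\otimes A$, of dimension at most $2\dim A-1<(\dim A)^2$, so $R$ is not surjective; if $\alpha=\gamma\neq 0$ and $\beta=0$ then $R(b\otimes 1)=0$, so $R$ is not injective). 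Furthermore, the step ``specializing $a,b,c\in\{1,u\}$ isolates coefficients'' is not automatic for a fixed algebra with $\dim A\geq 2$: the tensors in the expansion involve $u^2$ and $u^3$, which need not be linearly independent of $1$ and $u$, so one cannot simply read off coefficients; one must first pin down $\gamma\neq 0$ by the image-dimension count, normalize, and then extract the remaining relations from terms such as $u\otimes u\otimes 1$ whose independence uses only $u\notin k1$. As written, the necessity half is a declaration of intent with an incorrect predicted conclusion rather than a proof.
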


\bigskip

\begin{remark}

The Yang--Baxter equation plays an important role in knot theory. 
Turaev has described a general scheme to derive an invariant of 
oriented links from a Yang--Baxter operator, provided this 
one can be ``enhanced''.
In \cite{mn}, we considered the problem of applying Turaev's method to the
Yang--Baxter operators derived from algebra structures presented in
the above theorem. 

We concluded that the Alexander polynomial is the knot invariant corresponding 
to the axioms of associative algebras. 

\end{remark}

\bigskip

\begin{remark} In dimension two, the Theorem \ref{primat} leads to
the following R-matrix:
\begin{equation} \label{rmatcon2}
\begin{pmatrix}
1 & 0 & 0 & 0\\
0 & 1 & 0 & 0\\
0 & 1-q  & q & 0\\
\eta & 0 & 0 & -q
\end{pmatrix}
\end{equation}
where $ \eta \in \{ 0, \ 1 \} $, and $q \in k - \{ 0 \}$.

The FRT bialgebras associated to (\ref{rmatcon2}) 
have the following independent commutation relations:\\
(i) the case $ \eta =0 $
$$ba = qab, ac=ca, [a,d] = (1-q)cb, (1+q)b^2=0,$$
$$bc = qcb, bd = -qdb, (1+q)c^2 = 0, dc=-cd $$
(ii) the case $ \eta =1 $
$$ba = qab, 
ab = dc + cd,
[a,c] = db, 
a^2 - d^2 = (1+q)c^2,$$
$$ [a,d]=(1-q)cb, b^2=0,
bc = qcb, bd=-qdb $$
where $[a,c] = ac - ca, \ \  [a,d] = ad - da $.

The comultiplication
$ \delta (T) = T \otimes T $
and counit
$ \eps (T) = I_2 $
form the underlying coalgebra structure, where
$
T=\begin{pmatrix}
a & b\\
c & d
\end{pmatrix}
$
and
$
I_2=\begin{pmatrix}
1 & 0\\
0 & 1
\end{pmatrix}
$. The coquasitriangular structure is associated in the standard way.

\end{remark}

\subsection{The two-parameter form of the QYBE}

Formally, a coloured Yang-Baxter operator is defined as a function $$ R
:X\times X \to \rend k {V\otimes V}, $$ where $X$ is a set and $V$ is a
finite dimensional vector space over a field $k$. 

Thus, for any $u,v\in X$,
$R(u,v) : V\otimes V\to V\otimes V$ is a linear operator. 

We consider three operators acting on a triple
tensor product $V\otimes V\otimes V$, $R^{12}(u,v) = R(u,v)\otimes
\id$, $R^{23}(v,w)= \id\otimes R(v,w)$, and similarly $R^{13}(u,w)$ as
an operator that acts non-trivially on the first and third factor in
$V\otimes V\otimes V$. 

$R$ is a coloured Yang-Baxter operator if it
satisfies the two-parameter form of the QYBE,
\begin{equation}\label{yb} 
R^{12}(u,v)R^{13}(u,w)R^{23}(v,w) = R^{23}(v,w)
R^{13}(u,w)R^{12}(u,v)
\end{equation} 
for all $u,v,w\in X$. 


\bigskip

Below, we present families of solutions for the equations (\ref{yb}).
We assume that $X$ is equal to (a subset of) the ground field $k$.
The key point of the construction is to suppose 
that $V=A$ is an associative 
$k$-algebra, and then to derive a solution to equation (\ref{yb}) from 
the associativity of the product in $A$. 

Thus, in \cite{np}, we sought solutions to equation (\ref{yb}) of the following form
\begin{equation}\label{rans}
R(u,v)(a\otimes b) =\alpha(u,v)1\otimes ab + \beta(u,v)ab\otimes 1 -\gamma(u,v)b\otimes a,
\end{equation}
where $\alpha, \beta,\gamma$ are $k$-valued functions on $X\times X$. 

Inserting this ansatz into equation (\ref{yb}), 
we obtained the following system of equations
(whose solutions produce coloured Yang-Baxter operators): 
\begin{eqnarray} &&
(\beta(v,w)-\gamma(v,w))(\alpha(u,v)\beta(u,w) -
\alpha(u,w)\beta(u,v))\nonumber \\ &&\quad \quad \quad +
(\alpha(u,v)-\gamma(u,v))(\alpha(v,w)\beta(u,w) - \alpha(u,w)\beta(v,w))
= 0 \label{e1} \\ \nonumber \\ &&
\beta(v,w)(\beta(u,v)-\gamma(u,v))(\alpha(u,w)-\gamma(u,w)) \nonumber \\
&&\quad \quad \quad +
(\alpha(v,w)-\gamma(v,w))(\beta(u,w)\gamma(u,v)-\beta(u,v)\gamma(u,w)) =
0 \label{e2} \\ \nonumber \\ && \alpha(u,v)
\beta(v,w)(\alpha(u,w)-\gamma(u,w)) + \alpha(v,w)\gamma(u,w)
(\gamma(u,v) - \alpha(u,v)) \nonumber \\ &&\quad \quad \quad +
\gamma(v,w) (\alpha(u,v)\gamma(u,w)-\alpha(u,w)\gamma(u,v)) = 0
\label{e3} \\ \nonumber \\ && \alpha(u,v)
\beta(v,w)(\beta(u,w)-\gamma(u,w)) + \beta(v,w)\gamma(u,w) (\gamma(u,v)
- \beta(u,v)) \nonumber \\ &&\quad \quad \quad + \gamma(v,w)
(\beta(u,v)\gamma(u,w)-\beta(u,w)\gamma(u,v)) = 0 \label{e4} \\
\nonumber \\ && \alpha(u,v)(\alpha(v,w)-\gamma(v,w))(\beta(u,w) -
\gamma(u,w)) \nonumber \\ &&\quad \quad \quad + (\beta(u,v)-
\gamma(u,v))( \alpha(u,w) \gamma(v,w) - \alpha(v,w) \gamma(u,w)) = 0 \
\label{e5} \end{eqnarray}

\begin{remark}
(i) The system of equations (\ref{e1}--\ref{e5}) is rather non-trivial. It 
is an open problem to classify its solutions. 

(ii) We found the following solutions to that system:
$ \  \alpha(u,v)= p(u-v) $, $ \ \beta(u,v)=q(u-v) \ $ and 
$ \gamma (u,v)= pu-qv $.
(Thus, we obtained the next theorem.)

\end{remark}

\begin{theorem} (F. F. Nichita and D. Parashar, \cite{np})
For any two parameters $p,q\in k$, the function
$R:X\times X\to \rend k {A\otimes A}$ defined by
\begin{equation}\label{rsol} 
R(u,v)(a\otimes b) =p(u-v)1\otimes ab + q(u-v)ab\otimes 1 -(pu-qv)b\otimes a,
\end{equation}
satisfies the coloured QYBE (\ref{yb}).
\end{theorem}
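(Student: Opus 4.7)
The plan is to reduce the verification to the system of equations (\ref{e1})--(\ref{e5}), which was derived in the paragraph above the theorem by inserting the ansatz (\ref{rans}) into the coloured QYBE (\ref{yb}). Since the proposed $R(u,v)$ has exactly the form (\ref{rans}) with $\alpha(u,v)=p(u-v)$, $\beta(u,v)=q(u-v)$, and $\gamma(u,v)=pu-qv$, it suffices to check that these three functions satisfy each of (\ref{e1})--(\ref{e5}).

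The first step I would carry out is to record the two identities
\begin{equation*}
\alpha(x,y)-\gamma(x,y) = (q-p)\,y, \qquad \beta(x,y)-\gamma(x,y) = (q-p)\,x,
\end{equation*}
which are immediate from the definitions. Together with the observation that $\alpha$ and $\beta$ are proportional as functions on $X\times X$ (both are scalar multiples of $(x-y)$), these identities turn every summand of (\ref{e1})--(\ref{e5}) into a product of elementary factors in which a power of $(q-p)$ can be pulled out.

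The core of the verification is then bookkeeping. Equation (\ref{e1}) is the easiest: because $\alpha=\tfrac{p}{q}\beta$, each of the inner brackets $\alpha(u,v)\beta(u,w)-\alpha(u,w)\beta(u,v)$ and $\alpha(v,w)\beta(u,w)-\alpha(u,w)\beta(v,w)$ vanishes identically, so the whole expression is zero independently of the $(\beta-\gamma)$ and $(\alpha-\gamma)$ prefactors. For (\ref{e2})--(\ref{e5}) a short expansion is required: after applying the two identities above and factoring out the common $(q-p)$'s, each equation collapses to a single polynomial identity in $u,v,w$ which one verifies by direct computation. For instance, in (\ref{e2}) the first summand contributes $q(q-p)^2\, uw(v-w)$, and expanding $\beta(u,w)\gamma(u,v)-\beta(u,v)\gamma(u,w)$ produces exactly $(q-p)u(w-v)$ times $q$, so that the second summand equals $-q(q-p)^2\, uw(v-w)$ and the two cancel. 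The remaining three equations are analogous.

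The main obstacle is purely combinatorial: keeping track of signs and of which of the three pairs of arguments $(u,v)$, $(u,w)$, $(v,w)$ appears where, since each of the five equations has a different permutation pattern. There is no conceptual difficulty, because the theorem is guaranteed as soon as (\ref{e1})--(\ref{e5}) are checked. An alternative, entirely coordinate-free route would be to act with both sides of (\ref{yb}) on $a\otimes b\otimes c\in A^{\otimes 3}$ and match the nine tensor types produced using associativity of the product in $A$; this bypasses (\ref{e1})--(\ref{e5}) but generates essentially the same arithmetic.
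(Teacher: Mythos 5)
Your proposal is correct and follows exactly the route the paper takes: the remark immediately preceding the theorem states that the theorem is obtained by checking that $\alpha(u,v)=p(u-v)$, $\beta(u,v)=q(u-v)$, $\gamma(u,v)=pu-qv$ solve the system (\ref{e1})--(\ref{e5}) derived from the ansatz (\ref{rans}), which is precisely your reduction. Your sample computations (the vanishing of the antisymmetrized brackets in (\ref{e1}) and the cancellation $q(q-p)^2uw(v-w)-q(q-p)^2uw(v-w)$ in (\ref{e2})) are accurate, and the remaining equations do collapse the same way.
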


\begin{remark}
If $ \ pu \neq qv $ and $ \ qu \neq pv $ then the operator
($\ref{rsol}$) is invertible. Moreover, the following formula holds:
$$ R^{-1}(u,v)(a\otimes b) = \frac{p(u-v)}{(qu-pv)(pu-qv)}ba\otimes 1 + 
\frac{q(u-v)}{(qu-pv)(pu-qv)}1\otimes ba - \frac{1}{(pu-qv)}b\otimes a $$

\end{remark}

\bigskip

Algebraic manipulations of the previous theorem lead to the following result.

\begin{theorem} (F. F. Nichita and B. P. Popovici, \cite{nipo2}) \label{top}
Let $A$ be an associative 
$k$-algebra with $ \dim A \ge 2$ and
 $q\in k$. Then the operator
\begin{equation}\label{slsol} 
S( \lambda )(a\otimes b) = (e^{\lambda} - 1)1\otimes ab 
+ q(e^{\lambda} - 1)ab\otimes 1 -(e^{\lambda}-q)b\otimes a
\end{equation}
satisfies the one-parameter form of the Yang-Baxter equation:
$$S^{12} (\lambda_{1} - \lambda_{2}) S^{13} (\lambda_{1} - \lambda_{3}) S^{23}(\lambda_{2} - \lambda_{3})=$$
\begin{equation}\label{onepara}
= S^{23} (\lambda_{2} - \lambda_{3}) S^{13} (\lambda_{1} - \lambda_{2})
S^{12} (\lambda_{1} - \lambda_{2}).
\end{equation}
If $ \ e^{\lambda} \neq q  , \ \frac{1}{q} $, $ \ $ then the operator
($\ref{slsol}$) is invertible.
 
Moreover, the following formula holds:

$ \ \ \ S^{-1}(\lambda)(a\otimes b) = \frac{e^{\lambda}-1}{(qe^{\lambda}-1)(e^{\lambda}-q)}ba\otimes 1 + 
\frac{q(e^{\lambda}-1)}{(qe^{\lambda}-1)(e^{\lambda}-q)}1\otimes ba 
- \frac{1}{e^{\lambda}-q}b\otimes a $.
\end{theorem}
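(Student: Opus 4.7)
The plan is to derive Theorem~\ref{top} as a direct consequence of the Nichita--Parashar theorem by an exponential change of the spectral parameter, rather than verifying (\ref{onepara}) by brute force.

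First, I would specialise that theorem to $p=1$ and perform the substitution $u = e^{\mu}$, $v = e^{\nu}$ in the defining formula (\ref{rsol}). Factoring $e^{\nu}$ out of each of the three coefficients yields the key identity
\[
R(e^{\mu}, e^{\nu}) \;=\; e^{\nu}\, S(\mu - \nu),
\]
which says that $S(\lambda)$ is, up to a scalar multiple, just the coloured Yang--Baxter operator $R(u,v)$ read off at exponentiated arguments.

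Next, I would substitute $u = e^{\lambda_1}$, $v = e^{\lambda_2}$, $w = e^{\lambda_3}$ into the coloured QYBE (\ref{yb}) satisfied by $R$ and apply the identity above to each of the six factors. Both sides acquire the same overall scalar weight $e^{\lambda_2 + 2\lambda_3}$: on each side there is one $R^{12}$, one $R^{13}$ and one $R^{23}$ factor, contributing the scalars $e^{\lambda_2}$, $e^{\lambda_3}$ and $e^{\lambda_3}$ respectively. Cancelling this common scalar leaves exactly the one-parameter Yang--Baxter equation (\ref{onepara}).

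For the invertibility claim and the formula for $S^{-1}(\lambda)$, I would feed the same substitution $p=1$, $u = e^{\mu}$, $v = e^{\nu}$ into the explicit formula for $R^{-1}(u,v)$ recorded in the remark following the Nichita--Parashar theorem. Since $R(e^{\mu},e^{\nu}) = e^{\nu}\,S(\mu-\nu)$, one has $S^{-1}(\lambda) = e^{\nu}\,R^{-1}(e^{\mu}, e^{\nu})$; factoring $e^{\nu}$ out of each of the three terms matches the claimed coefficients term-by-term, and the nondegeneracy conditions $pu \neq qv$, $qu \neq pv$ translate into $e^{\lambda} \neq q$ and $e^{\lambda} \neq 1/q$.

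The only real obstacle is bookkeeping: one must verify that the scalar $e^{\nu}$ accumulates symmetrically on both sides of (\ref{yb}) so that the equation is homogeneous in the rescaling. Once the identity $R(e^{\mu},e^{\nu}) = e^{\nu} S(\mu-\nu)$ is established, the theorem falls out essentially automatically, justifying the authors' description of it as an ``algebraic manipulation'' of the previous result.
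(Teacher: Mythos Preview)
Your proposal is correct and is precisely the ``algebraic manipulation'' the paper alludes to: the paper does not spell out a proof beyond noting that the result follows from the Nichita--Parashar theorem (and remarking that it can alternatively be obtained by Baxterization), and your exponential substitution $u=e^{\mu}$, $v=e^{\nu}$ with $p=1$, yielding $R(e^{\mu},e^{\nu})=e^{\nu}S(\mu-\nu)$, makes this derivation explicit. The scalar bookkeeping and the deduction of the inverse formula from Remark~2.9 are handled correctly.
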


\begin{remark}

The operator from Theorem \ref{top} can be obtained from
Theorem \ref{primat} and the {\bf Baxterization} procedure from \cite{defk} (page 22). 

Hint: Consider the operator
$ \  R^{A}_{q, \frac{1}{q}, \frac{1}{q} }: A \ot A \rightarrow A \ot A, \ \ 
 a \ot b \mapsto q ab \ot 1 +  \frac{1}{q} \ot ab -
 \frac{1}{q} a \ot b $ and its inverse, $ R^{A}_{q, \frac{1}{q}, q } $.

\end{remark}

\bigskip

\subsection{Yang-Baxter systems}

It is convenient to describe the Yang-Baxter systems in terms of
the Yang-Baxter commutators.

Let $V$, $V'$, $V''$ be finite dimensional
vector spaces over the field $k$, and let $R: V\ot
V' \rightarrow V\ot V'$, $S: V\ot V'' \rightarrow V\ot V''$ and $T:
V'\ot V'' \rightarrow V'\ot V''$ be three linear maps.
The {\em constant Yang--Baxter
commutator} is a map $[R,S,T]: V\ot V'\ot V'' \rightarrow V\ot V'\ot
V''$ defined by \beq [R,S,T]:= R^{12} S^{13} T^{23} - T^{23} S^{13}
R^{12}. \eeq 
Note that $[R,R,R] = 0$ is just a short-hand
notation for the constant QYBE (\ref{ybeq2}).

A system of linear
maps
$W: V\ot V\ \rightarrow V\ot V,\quad Z: V'\ot V'\ \rightarrow V'\ot
V',\quad X: V\ot V'\ \rightarrow V\ot V',$ is called a
$WXZ$--system if the
following conditions hold: \beq \label{ybsdoub} [W,W,W] = 0 \qquad
[Z,Z,Z] = 0 \qquad [W,X,X] = 0 \qquad [X,X,Z] = 0\eeq 
It
was observed that $WXZ$--systems with invertible $W,X$ and $Z$ can
be used to construct dually paired bialgebras of the FRT type
leading to quantum doubles. The above is one type of a constant
Yang--Baxter system that has recently been studied in \cite{np} and
also shown to be closely related to entwining structures \cite{bn}.

\bigskip

\begin{theorem} (F. F. Nichita and D. Parashar, \cite{np})
Let $A$ be a $k$-algebra, and $ \lambda, \mu \in k$. The following is a 
$WXZ$--system:

$ W : A \ot A \rightarrow A \ot A, \ \ 
W(a \ot b)= ab \ot 1 + \lambda 1 \ot ab - b \ot a $,

$ Z : A \ot A \rightarrow A \ot A, \ \ 
Z(a \ot b)= \mu ab \ot 1 +  1 \ot ab - b \ot a $,

$ X : A \ot A \rightarrow A \ot A, \ \ 
X(a \ot b)= ab \ot 1 +  1 \ot ab - b \ot a $.

\end{theorem}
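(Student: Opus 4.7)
The plan is to decompose the three operators into simpler building blocks, and then reduce everything to the Yang--Baxter equation for $X$ together with two small auxiliary identities. Introduce the maps $P,Q\colon A\ot A\to A\ot A$ defined by $P(a\ot b)=ab\ot 1$ and $Q(a\ot b)=1\ot ab$, and let $\tau$ be the twist. Then
\[
W=P+\lambda Q-\tau,\qquad X=P+Q-\tau,\qquad Z=\mu P+Q-\tau,
\]
so $W=X+(\lambda-1)Q$ and $Z=X+(\mu-1)P$. One also checks directly that $W=\tau\circ R^{A}_{\lambda,1,1}$, $X=\tau\circ R^{A}_{1,1,1}$, and $Z=\tau\circ R^{A}_{1,\mu,1}$, in the notation of Theorem \ref{primat}.

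For the first two conditions $[W,W,W]=0$ and $[Z,Z,Z]=0$ I would invoke Theorem \ref{primat} together with the Remark stating that $\tau\circ R$ satisfies the constant QYBE whenever $R$ satisfies the braid equation: cases (ii) and (i) of Theorem \ref{primat} apply to $R^{A}_{\lambda,1,1}$ (for $\lambda\ne 0$) and to $R^{A}_{1,\mu,1}$ (for $\mu\ne 0$) respectively. The corner cases $\lambda=0$ and $\mu=0$ are dealt with either by a direct computation or, when $k$ is infinite, by a polynomial continuity argument, since each matrix entry of the Yang--Baxter commutator is polynomial in the relevant parameter. Taking $\lambda=\mu=1$ yields in particular $[X,X,X]=0$.

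For the two mixed conditions I exploit the trilinearity of the commutator $[\cdot,\cdot,\cdot]$. Substituting $W=X+(\lambda-1)Q$ and using $[X,X,X]=0$ gives
\[
[W,X,X]=(\lambda-1)\,[Q,X,X],
\]
and analogously $[X,X,Z]=(\mu-1)\,[X,X,P]$. The theorem therefore reduces to the two auxiliary identities $[Q,X,X]=0$ and $[X,X,P]=0$. Each of these is verified by direct computation on a generic element $a\ot b\ot c$: expand $X^{13}X^{23}$ (respectively $X^{13}X^{12}$), apply $Q^{12}$ (respectively $P^{23}$), and compare with the opposite composition, using associativity in $A$ to collapse repeated products. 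The main obstacle is the bookkeeping in this final direct verification, but the simplicity of $P$ and $Q$---each produces a single tensor monomial per input---keeps the number of terms small enough for the required cancellations to be transparent.
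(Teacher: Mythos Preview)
The paper itself does not prove this theorem; it simply quotes the result from \cite{np} without argument, so there is no ``paper's proof'' to compare against.  That said, your proposal is correct.

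A few comments on the details.  The identifications $W=\tau\circ R^{A}_{\lambda,1,1}$ and $Z=\tau\circ R^{A}_{1,\mu,1}$ are right, and combining Theorem~\ref{primat} with Remark~2.2 indeed yields $[W,W,W]=0$ and $[Z,Z,Z]=0$ for $\lambda,\mu\neq 0$.  For the degenerate values your polynomial argument is fine over any field with at least five elements (the commutator is cubic in the parameter), and a direct check handles the remaining tiny fields; alternatively one may observe that the identity holds in the free associative $\mathbb{Z}$--algebra on three generators and hence universally.  Note that Theorem~\ref{primat} characterises when $R^{A}_{\alpha,\beta,\gamma}$ is an \emph{invertible} Yang--Baxter operator, so strictly speaking its failure at $\lambda=0$ does not preclude the braid relation there, but you do not rely on that.

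The trilinearity reduction $[W,X,X]=(\lambda-1)[Q,X,X]$ and $[X,X,Z]=(\mu-1)[X,X,P]$ is valid, and the two auxiliary identities hold exactly as you outline: after applying $Q^{12}$ (respectively $P^{23}$) the relevant tensor factor becomes $1$, whereupon $X^{13}$ acts as the identity on that element, and the single remaining application of $X$ matches the other side.  So the bookkeeping you flag as the main obstacle is in fact very short.
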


\begin{remark}
Let $R$ be a 
solution for the two-parameter form of the QYBE, i.e.
$ \ R^{12}(u,v)R^{13}(u,w)R^{23}(v,w) = R^{23}(v,w)
R^{13}(u,w)R^{12}(u,v)
 \ \  \forall \ u,v,w\in X$.

Then, if we fix $ s, t \in X$, we obtain the following
$WXZ$--system:

$W= R(s, s) $,
$ \ X= R(s, t) $ and
$\ Z= R(t, t) $.

\end{remark}

\begin{remark} The Section 5  of \cite{np0}
provides connections
between the constant and coloured Yang-Baxter operators and Yang-Baxter 
systems 
from algebra structures,
which
were
discovered while presenting the poster \cite{np3} in
Cambridge (2006).
\end{remark}

\section{YANG--BAXTER OPERATORS FROM $( \mathbb{G},\theta )$-LIE ALGEBRAS}

The $(\mathbb{G},\theta)$-Lie algebras are structures which unify 
the Lie algebras and Lie superalgebras. We 
use them to produce solutions for the  
quantum Yang--Baxter equation. 
The  spectral-parameter 
Yang-Baxter equations and
Yang-Baxter systems are also studied. 
The following authors constructed 
Yang-Baxter
operators from Lie (co)algebras and Lie superalgebras before:
 \cite{mj}, \cite{ba}, \cite{ni}, etc.

\subsection{Lie superalgebras}

\bigskip
\begin{definition}
A Lie superalgebra is a (nonassociative) $\mathbb{Z}_2$-graded algebra, or superalgebra, 
over a field $k$ with the  Lie superbracket, satisfying the two conditions:
$$[x,y] = -(-1)^{|x||y|}[y,x]$$
$$ (-1)^{|z||x|}[x,[y,z]]+(-1)^{|x||y|}[y,[z,x]]+(-1)^{|y||z|}[z,[x,y]]=0 $$
where $x$, $y$ and $z$ are pure in the $\mathbb{Z}_2$-grading. Here, $|x|$ denotes the degree of $x$ (either 0 or 1). 
The degree of $[x,y]$ is the sum of degree of $x$ and $y$ modulo $2$.
\end{definition}

Let $ ( L , [,] )$ be a Lie superalgebra over $k$,
and  $ Z(L) = \{ z \in L : [z,x]=0 \ \ \forall \ x \in L \} $.

For $ z \in Z(L), \ \vert z \vert =0 $ and $ \alpha \in k $ we define:

$$ { \phi }^L_{ \alpha} \ : \ L \ot L \ \ \longrightarrow \ \  L \ot L $$

$$ 
x \ot y \mapsto \alpha [x,y] \ot z + (-1)^{ \vert x \vert \vert y \vert } y \ot x \ . $$

Its inverse is:

$$ {{ \phi }^L_{ \alpha}}^{-1} \ : \ L \ot L \ \ 
\longrightarrow \ \  L \ot L $$

$$x \otimes y \mapsto \alpha z \otimes [x, y] + (-1)^{ \vert x \vert \vert y \vert 
} y \otimes x$$

\begin{theorem} (F. F. Nichita and B. P. Popovici, \cite{nipo2})

Let  $ ( L , [,] )$ be a Lie superalgebra 
and
$ z \in Z(L), \vert z \vert = 0  $, and $ \alpha \in k $. Then:
$ \ \ \ \  { \phi }^L_{ \alpha} $ is a YB operator.
\end{theorem}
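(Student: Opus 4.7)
The plan is to decompose the operator as $\phi^L_\alpha = \alpha B + T$, where $T(x \otimes y) := (-1)^{|x||y|} y \otimes x$ is the graded twist on $L \otimes L$ and $B(x \otimes y) := [x,y] \otimes z$. Substituting into the braid equation $\phi^{12}\phi^{23}\phi^{12} = \phi^{23}\phi^{12}\phi^{23}$ and expanding produces eight monomials on each side, which I would group by the number of $B$-factors (the coefficient of $\alpha^k$, $k=0,1,2,3$) and verify equality stratum by stratum on a homogeneous tensor $x \otimes y \otimes w$.

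The central observation driving all cancellations is that $z \in Z(L)$ together with $|z|=0$ forces $[z,-] = 0 = [-,z]$. Consequently $B$ annihilates every input of the form $z \otimes v$ or $v \otimes z$ in the slots where $B$ acts. This immediately kills both triple-$B$ compositions $B^{12}B^{23}B^{12}$ and $B^{23}B^{12}B^{23}$: after the first $B$ produces a $z$, the next adjacent $B$ sees it. The same principle eliminates $T^{12}B^{23}B^{12}$ on the left-hand side and $B^{23}T^{12}B^{23}$, $B^{23}B^{12}T^{23}$ on the right.

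The zero-$B$ identity $T^{12}T^{23}T^{12} = T^{23}T^{12}T^{23}$ is the standard super-braid relation for the graded twist. For the one-$B$ terms, direct Koszul-sign bookkeeping shows the three LHS compositions pair up bijectively with the three RHS compositions: $T$ is a graded symmetry that transports the lone $B$ through the remaining tensor slots with matching signs. For the two-$B$ terms, after the vanishings above the LHS reduces to $\alpha^2(B^{12}B^{23}T^{12} + B^{12}T^{23}B^{12})$ applied to $x \otimes y \otimes w$, yielding
$$\alpha^2\bigl((-1)^{|x||y|}[y,[x,w]] + [[x,y],w]\bigr) \otimes z \otimes z,$$
while the RHS reduces to $\alpha^2 T^{23}B^{12}B^{23}$, yielding $\alpha^2 [x,[y,w]] \otimes z \otimes z$. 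Their equality is precisely the super-Jacobi identity rewritten via graded skew-symmetry, namely $[x,[y,w]] = (-1)^{|x||y|}[y,[x,w]] + [[x,y],w]$.

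The main obstacle is the meticulous tracking of Koszul signs across the twelve three-fold compositions; once the bookkeeping is in place, the identity falls out cleanly. Finally, verifying that the formula given in the statement is a two-sided inverse of $\phi^L_\alpha$ proceeds along the same lines: the cross terms each contain a bracket with $z$ and vanish by centrality, while $T^2 = \mathrm{id}$ on pure tensors since $(-1)^{2|x||y|} = 1$.
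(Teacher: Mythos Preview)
Your argument is correct. The decomposition $\phi^L_\alpha=\alpha B+T$ and the stratification by powers of $\alpha$ work exactly as you describe: the $\alpha^0$ layer is the graded hexagon for $T$, the $\alpha^1$ layer matches term by term, the $\alpha^3$ layer vanishes by $z\in Z(L)$, and the $\alpha^2$ layer reduces to the super--Jacobi identity in the form $[x,[y,w]]=(-1)^{|x||y|}[y,[x,w]]+[[x,y],w]$. The inverse check via $T^2=\mathrm{id}$ and $[z,-]=0$ is likewise fine.

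The paper does not actually supply a proof of this particular theorem; it merely cites \cite{nipo2}. The only proof written out in Section~3 is that of the more general $(\mathbb{G},\theta)$ statement (Theorem~3.7), and it is organised differently: there one works with the operator $R(x\otimes y)=\alpha[x,y]\otimes z+\theta(a,b)\,x\otimes y$ (no swap) and verifies the \emph{constant QYBE} $R^{12}R^{13}R^{23}=R^{23}R^{13}R^{12}$ directly, extracting four relations on the colour function $\theta$ at the degree $g$ of $z$. Your route instead attacks the \emph{braid} form for the swapped operator and never leaves the Lie-superalgebra setting. The trade-off is that your approach makes the appearance of super--Jacobi completely explicit (it is precisely the $\alpha^2$ identity), whereas the paper's computation is tailored to isolating the hypotheses on $\theta$ needed in the graded-group generality. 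Both are legitimate; yours is arguably the cleaner proof for the Lie-superalgebra case as stated.
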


\begin{theorem} (F. F. Nichita and B. P. Popovici, \cite{nipo2})
 Let  $ ( L , [,] )$ be a Lie superalgebra, 
$ z \in Z(L), \vert z \vert = 0  $, 
$ X \subset k $,
and $ \alpha, \beta:X \times X \rightarrow k $. 
Then,
$R:X\times X\to \rend k {L \otimes L}$ defined by
\begin{equation}\label{Lsol} 
R(u,v)(a\otimes b) = \alpha(u,v)[a,b]\otimes z + \beta (u,v) (-1)^{|a||b|} a\otimes b,
\end{equation}
satisfies the colored QYBE (\ref{yb}) $ \iff 
 {\beta(u,w)} {\alpha(v,w)} =  {\alpha(u,w)} {\beta(v,w)} .$

\end{theorem}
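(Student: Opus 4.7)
The plan is to plug the ansatz into the colored QYBE (\ref{yb}) and match coefficients on homogeneous pure tensors. Writing $R(u,v) = \alpha(u,v)\,T + \beta(u,v)\,\Sigma$, where $T(a\otimes b) = [a,b]\otimes z$ and $\Sigma(a\otimes b) = (-1)^{|a||b|}\,a\otimes b$, expanding each side by multilinearity gives $2^{3}=8$ summands, each of the form (a product of three scalar coefficients) times a threefold composition of $T^{ij}$'s and $\Sigma^{ij}$'s. I would evaluate everything on a homogeneous $a\otimes b\otimes c$ and compare.

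The key simplification uses the hypotheses on $z$: since $|z|=0$ and $[z,-]=0$, any composition of two $T^{ij}$'s in which the $z$ inserted by the earlier factor lies in a slot that is bracketed by the later factor vanishes identically. A short case check discards the triple-$T$ term $T^{12}T^{13}T^{23}$ and the mixed term $\Sigma^{12}T^{13}T^{23}$ on the LHS, together with $T^{23}T^{13}T^{12}$, $T^{23}T^{13}\Sigma^{12}$ and $T^{23}\Sigma^{13}T^{12}$ on the RHS. Six summands survive on the LHS and five on the RHS. Moreover, $\Sigma$ acting on a slot containing $z$ contributes the trivial sign, which keeps the remaining super-sign arithmetic manageable.

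Sorting the survivors by the shape of their output, four sectors match immediately. The pure-tensor sector (coefficient $\beta_1\beta_2\beta_3$) and the three one-bracket sectors, with outputs of shape $[x,y]\otimes z\otimes w$, $\,x\otimes [y,w]\otimes z$, and $[x,w]\otimes y\otimes z$, each receive exactly one contribution from each side; the scalar products coincide (they are the same three-fold products written in opposite orders) and the super-signs agree because $|z|=0$ trivializes every $\Sigma$ that meets a $z$-slot. All the content therefore sits in the double-bracket sector landing in $L\otimes kz\otimes kz$, where LHS contributes $\alpha_1\alpha_2\beta_3\,(-1)^{|y||w|}\,[[x,w],y]$ and $\alpha_1\beta_2\alpha_3\,[x,[y,w]]$, while RHS contributes only $\alpha_1\alpha_2\beta_3\,[[x,y],w]$ (with $\alpha_i,\beta_i$ standing for the scalars at the appropriate argument pairs).

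Rewriting $[[x,w],y] = -(-1)^{(|x|+|w|)|y|}[y,[x,w]]$ by graded antisymmetry and $[x,[y,w]] = [[x,y],w] + (-1)^{|x||y|}[y,[x,w]]$ by super Jacobi, the three contributions telescope and the difference LHS$-$RHS collapses to $\alpha(u,v)\bigl(\beta(u,w)\alpha(v,w)-\alpha(u,w)\beta(v,w)\bigr)\,[x,[y,w]]\otimes z\otimes z$. Since $L$ is non-trivial there exist homogeneous $x,y,w$ with $[x,[y,w]]\neq 0$, so vanishing of this expression for all inputs and all (generic) $u,v,w\in X$ is equivalent to the stated identity. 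The main obstacle I anticipate is the sign bookkeeping in the graded setting together with the correct invocation of super Jacobi; it is safest to first verify the calculation in the ungraded case and then reinstate the degree-dependent signs.
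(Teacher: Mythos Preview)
Your approach---direct expansion of both sides of (\ref{yb}) on a homogeneous triple and comparison by tensor shape---is exactly the method the paper uses (the paper does not write out a proof for this particular theorem, but the displayed proof of the $(\mathbb{G},\theta)$-Lie algebra theorem immediately following it is the same computation in the more general graded setting). Your bookkeeping is correct: the single-bracket and pure-tensor sectors cancel automatically, and in the $L\otimes kz\otimes kz$ sector the super Jacobi identity in the form $[[x,y],w]=[x,[y,w]]-(-1)^{|x||y|}[y,[x,w]]$ together with graded antisymmetry collapses the difference to
\[
\alpha(u,v)\bigl(\beta(u,w)\alpha(v,w)-\alpha(u,w)\beta(v,w)\bigr)\,[x,[y,w]]\otimes z\otimes z,
\]
just as you claim.

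One point to tighten in the ``only if'' direction: the assertion ``since $L$ is non-trivial there exist homogeneous $x,y,w$ with $[x,[y,w]]\neq 0$'' is not automatic. A two-step nilpotent Lie (super)algebra (nonzero bracket but $[L,[L,L]]=0$) is non-trivial yet has every iterated bracket vanishing, and then the QYBE holds for \emph{any} $\alpha,\beta$. Likewise, the overall factor $\alpha(u,v)$ means that strictly the computation yields $\alpha(u,v)\bigl(\beta(u,w)\alpha(v,w)-\alpha(u,w)\beta(v,w)\bigr)=0$; your parenthetical ``(generic)'' hides the implicit hypothesis that $\alpha$ is not identically zero. These are really gaps in the theorem's hypotheses rather than in your argument, but you should flag them explicitly rather than absorb them into ``non-trivial'' and ``generic''.
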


\begin{remark}
$ \alpha(u,v)= f(v)$ and $ \beta(u,v)= g(v)$ is a solution for the above condition.

Letting $ u=v $, we obtain that:
$$ { \phi }^L_{ \alpha, \beta} \ : \ L \ot L \ \ \longrightarrow \ \  L \ot L $$

$$ 
x \ot y \mapsto \alpha [x,y] \ot z + (-1)^{ \vert x \vert \vert y \vert } \beta y \ot x \ . $$

and its inverse:

$$ {{ \phi }^L_{ \alpha, \beta}}^{-1} \ : \ L \ot L \ \ 
\longrightarrow \ \  L \ot L $$

$$x \otimes y \mapsto \frac{\alpha}{ {\beta}^2} z \otimes [x, y] + 
(-1)^{ \vert x \vert \vert y \vert 
} \frac{1}{ \beta } y \otimes x$$

are Yang-Baxter operators.
\end{remark}

\begin{remark}
Let us consider the above data and apply it to Remark 2.13.
Then, if we let $ s, t \in X$, we obtain the following
$WXZ$--system:

$W(a\otimes b) = R(s,s)(a\otimes b)=
 f(s)[a,b]\otimes z + g(s) (-1)^{|a||b|} a\otimes b, $ and

$Z(a\otimes b) = R(t,t)(a\otimes b)=   \ X(a\otimes b)= R(s,t)(a\otimes b)=
f(t)[a,b]\otimes z + g(t) (-1)^{|a||b|} a\otimes b$.

\end{remark}

\begin{remark}
The results presented in this section hold for Lie algebras as well.
This is a consequence of the fact that these operators restricted to the
first component of a Lie superalgebra have the same properties.

\end{remark}

\subsection{ $(\mathbb{G},\theta)$-Lie algebras}

We now consider the case of  $(\mathbb{G},\theta)$-Lie algebras as in \cite{Kanak}: a generalization of
Lie algebras and Lie superalgebras. 

A $(\mathbb{G},\theta)$-Lie algebras consists of a $\mathbb{G}$-graded 
vector space $L$, with $L=\oplus_{g\in\mathbb{G}}L_g$,  $\mathbb{G}$ a finite abelian group, a non associative 
multiplication $\langle ..,..\rangle : L \times L \to L$ respecting the graduation in the sense that
$\langle L_a,L_b\rangle \subseteq  L_{a+b}, \;\; \forall a,b\in \mathbb{G}$ and a function 
$\theta:\mathbb{G}\times\mathbb{G}\to C^{*} $ taking non-zero complex values. The following conditions
are imposed:
\begin{itemize}
  \item $\theta$-braided (G-graded) antisymmetry: $\langle x,y\rangle = -\theta(a,b)\langle y,x  \rangle$ 
  \item $\theta$-braided (G-graded) Jacobi id: $\theta(c,a)\langle x, \langle y,z\rangle\rangle + \theta(b,c)\langle z, \langle x,y\rangle\rangle +\theta(a,b)\langle y, \langle z,x\rangle\rangle =0 $
  \item $\theta : G \times G \to C^*$ color function 
$\left \{ \begin{array}{c}\theta(a+b,c) = 
\theta(a,c)\theta(b,c)\\ 
\theta(a,b+c) = \theta(a,b)\theta(a,c)\\   
\theta(a,b)\theta(b,a) = 1 \end{array}\right . $
\end{itemize}  
for all homogeneous $x\in L_a, y \in L_b, z\in L_c$ and $\forall a,b,c \in \mathbb{G}$.

\begin{theorem} (F. F. Nichita and B. P. Popovici, \cite{nipo2})
Under the above assumptions,

\begin{equation}\label{Lsol} 
R(x\otimes y) = \alpha[x,y]\otimes z + 
\theta(a,b) x\otimes y, 
\end{equation}
with $ z \in Z(L)$,
satisfies the equation ( \ref{ybeq2} )
$ \iff 
\theta(g,a)= \theta(a,g)=\theta(g,g)=1$, $\forall x\in L_a$ and $z\in L_g$.

The inverse operator reads: $R^{-1}(x\otimes y) = 
\alpha [y,x] \otimes z + \theta(b,a) x\otimes y $
\end{theorem}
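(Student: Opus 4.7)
My plan is to evaluate both sides of the quantum Yang--Baxter equation~(\ref{ybeq2}) directly on a homogeneous tensor $x\otimes y\otimes w$, with $x\in L_a$, $y\in L_b$, $w\in L_c$ and $z\in L_g$ central, and then to compare the resulting expressions term by term. Concretely, I would first compute $R^{23}(x\otimes y\otimes w)=\alpha\,x\otimes [y,w]\otimes z+\theta(b,c)\,x\otimes y\otimes w$, then apply $R^{13}$ (rewritten as $(I\otimes\tau)(R\otimes I)(I\otimes\tau)$), and finally $R^{12}$. The centrality hypothesis $z\in Z(L)$ is used repeatedly to simplify the output: every time a subsequent application of $R$ would form a bracket involving $z$, centrality together with $\theta$-braided antisymmetry yields $[z,\cdot]=[\cdot,z]=0$ and the corresponding summand vanishes. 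The right-hand side $R^{23}R^{13}R^{12}(x\otimes y\otimes w)$ is expanded in the same way.

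After this culling both sides become linear combinations of exactly six tensor families: a scalar multiple of $x\otimes y\otimes w$; three ``first-order'' families spanned by $[x,y]\otimes z\otimes w$, $[x,w]\otimes y\otimes z$, and $x\otimes [y,w]\otimes z$; and two ``second-order'' families whose vectors are triple brackets tensored with $z\otimes z$. Matching the scalar coefficients in the first four families and using the bicharacter identities $\theta(a+b,c)=\theta(a,c)\theta(b,c)$, $\theta(a,b+c)=\theta(a,b)\theta(a,c)$ and $\theta(a,b)\theta(b,a)=1$ forces, respectively, $\theta(g,c)=1$, $\theta(a,g)=1$ and $\theta(b,g)=1$ for all $a,b,c\in\mathbb{G}$; conversely, once these identities are assumed the matching of coefficients in these four families is automatic.

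The step I expect to be the main obstacle is the $z\otimes z$--component, where the left-hand side reads
\[
\alpha^{2}\bigl(\theta(a,g)\,[x,[y,w]]+\theta(b,c)\,[[x,w],y]\bigr)\otimes z\otimes z,
\]
while the right-hand side reads $\alpha^{2}\,\theta(g,g)\,[[x,y],w]\otimes z\otimes z$. With $\theta(a,g)=1$ already secured from the previous step, I would rewrite $[[x,w],y]$ and $[[x,y],w]$ in terms of $[y,[w,x]]$ and $[w,[x,y]]$ via $\theta$-braided antisymmetry, clear the resulting $\theta$-factors, and recognise the $\theta$-braided Jacobi identity $\theta(c,a)[x,[y,w]]+\theta(a,b)[y,[w,x]]+\theta(b,c)[w,[x,y]]=0$; the bookkeeping of $\theta$-factors then balances exactly when $\theta(g,g)=1$, completing the ``only if'' direction, while running the same identities in reverse yields the ``if'' direction. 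The inverse formula is a one-line check: applying $R$ to $\alpha[y,x]\otimes z+\theta(b,a)\,x\otimes y$ and using centrality of $z$, the identity $\theta(a,b)\theta(b,a)=1$, and braided antisymmetry $[y,x]=-\theta(a,b)^{-1}[x,y]$ returns $x\otimes y$ with no further constraint.
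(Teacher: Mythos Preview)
Your proposal is correct and follows essentially the same route as the paper: evaluate both sides of (\ref{ybeq2}) on a homogeneous triple, kill every bracket with $z$ via centrality, and compare coefficients of the resulting tensor types---the paper's displayed relations (3.7)--(3.10) are precisely your first-order and $z\otimes z$ matchings. You are in fact more explicit than the paper about invoking the $\theta$-braided Jacobi identity for the $z\otimes z$-component and about verifying the inverse; one cosmetic slip is that all second-order terms land in the single family $(\cdot)\otimes z\otimes z$, not two, but this does not affect the argument.
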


\begin{proof} If we consider the homogeneous elements $x\in L_a$, $y\in L_b$, $t\in L_c$, 
$$ R^{12} R^{13}R^{23} (x \ot y \ot t) = R^{23} R^{13} R^{12} (x \ot y \ot t)  $$
is equivalent to
\begin{eqnarray}
\theta(a,g)[x,[y,t]]\ot z \ot z +  \theta(b,c) [[x,t],y]\ot z \ot z = \theta(g,g)  [[x,y],c]\ot z \ot z   \\
\theta(a,g)\theta(a,b+c) x\ot [y,t] \ot z = \theta(a,b)\theta(a,c)x\ot\ [y,t] \ot z    \\
\theta(b,c) \theta(a+c,b) [x,t]\ot y \ot z = \theta(a,b)\theta(b,g)[x,t]\ot y \ot z  \\   
\theta(b,c) \theta(a,c) [x,y] \ot z \ot t  = \theta(a+b,c) \theta(g,c)[x,y]\ot z \ot t 
\end{eqnarray}

Due to the conditions $\langle L_a, L_b\rangle \subseteq L_{a+b} $ the above relations   are true 
if $\theta(a,g)=\theta(b,g)=\theta(g,c)=\theta(g,g)=1$ is assumed.
\end{proof}

\section{Applications. Problems. Directions of study}

\subsection{A Duality Theorem for (Co)Algebras}

\ \ \ \ \ \ Our aim in
this subsection is to present an extension of the duality of finite dimensional algebras and
coalgebras to the category of finite dimensional Yang-Baxter structures, denoted {\bf 
f.d. YB \ str}.

\bigskip

\begin{definition}
We define the category ${\bf YB \ str \,\ }$(respective ${\bf f.d. YB \ str}$)
whose objects are 4-tuples $ (V,\,\varphi ,\,e,\,\varepsilon )\,$,
where\\
i) \ \ \ \ $V$ is a (finite dimensional) $k$-space;\\
$ii)\ \ \ \ \ \varphi :V\otimes V\rightarrow V\otimes V$ \ is a YB
operator;\\
iii)$\ \ \ \ \ \ e\in V\,\ \ $ such that $\ \ \varphi (x\otimes e)=e\otimes
x,\,\ \varphi (e\otimes x)=x\otimes e \ \ \forall x \in V \ $;\\
iv) $\ \ \varepsilon \in V\rightarrow k\ $ is\ a $k$-map such
that $ \ \ \ (I\,\otimes \varepsilon )\circ \varphi =\varepsilon \otimes
I,\,\ \ \ (\varepsilon \otimes I)\circ \varphi =I\,\otimes \varepsilon \,.$

A morphism $f:(V,\,\varphi ,\,e,\,\varepsilon )\rightarrow (V^{\prime
},\,\varphi ^{\prime },\,e^{\prime },\,\varepsilon ^{\prime })$ in the
category ${\bf YB \ str}$
is a $k$-linear map $f:V\rightarrow V^{\prime }$ such that:\\
v) $\ \ \ \ (f\otimes f)\circ \varphi \,=\,\varphi ^{\prime }\circ
(f\otimes f) \ ; $\\
vi) $ \ \ \ \ f(e)=e^{\prime } \ ; $\\
vii) $ \ \ \ \ \varepsilon ^{\prime }\circ f=\varepsilon \,.$
\end{definition}

\vspace{0.2cm}

\begin{remark} The following are examples of objects from the category ${\bf
YB \ str}$:

(i) Let $\ R:V\otimes V\rightarrow V\otimes V$ \ is a YB operator.
Then $(V,\,R,\,0,\,0)\,\ $is an object in the category ${\bf YB \ str}$.

(ii)  Let $V\,\ $be a two dimensional $k$-space generated by the vectors $\
e_{1} \ $ and $ \ e_{2} \ $.
Then $(V,\,T,e_{1}\,,e_{2}^{\ast }\,)\,\ $is
an object in the category ${\bf f.d. \ YB \ str}$.

\end{remark}

\begin{thm} (F. F. Nichita and S. D. Schack, \cite{ns})
i) \ There exists a functor:

$\ \ F:{\bf k-alg}\longrightarrow \ {\bf YB \ str}$

$(\ A,\ M,\ u)\mapsto (A,{\varphi }_{A},\,u(1)=1_{A},\,0\in A^{\ast })\ \ \
\ \ \ where\ \ {\varphi }_{A}(a\otimes b)=ab\otimes 1+1\otimes ab-a\otimes
b. $

  Any k-algebra map
f is simply mapped into a k-map.

ii) $\ F$ is a full and faithful embbeding.
\end{thm}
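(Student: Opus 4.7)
The plan is to construct $F$ explicitly and check well-definedness on objects and morphisms, observe that faithfulness is automatic, and then do the one real calculation, which extracts multiplicativity of a YB-structure morphism from axiom (v).

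For well-definedness, I verify that $(A, \varphi_A, 1_A, 0)$ is an object of ${\bf YB \ str}$. That $\varphi_A$ is a Yang--Baxter operator is precisely Theorem 2.4 applied with $\alpha = \beta = \gamma = 1$ (case (i) of that theorem). Axiom (iii) is a one-line check: $\varphi_A(x \otimes 1) = x \otimes 1 + 1 \otimes x - x \otimes 1 = 1 \otimes x$ and symmetrically $\varphi_A(1 \otimes x) = x \otimes 1$. Axiom (iv) is vacuous since $\varepsilon = 0$. For a $k$-algebra map $f : A \to B$, the functoriality axioms (v), (vi), (vii) reduce respectively to the pair ($f(ab) = f(a) f(b)$ together with $f(1_A) = 1_B$), to $f(1_A) = 1_B$, and to the tautology $0 = 0 \circ f$. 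Faithfulness is then immediate, since $F$ leaves the underlying $k$-linear map of an algebra homomorphism unchanged.

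The content lies in fullness. Suppose $f : F(A) \to F(B)$ is a morphism in ${\bf YB \ str}$. Axiom (vi) gives $f(1_A) = 1_B$, so I need only show $f$ is multiplicative. Expanding axiom (v) on a pure tensor $a \otimes b$ yields
\[
f(ab) \otimes 1 + 1 \otimes f(ab) - f(a) \otimes f(b) = f(a) f(b) \otimes 1 + 1 \otimes f(a) f(b) - f(a) \otimes f(b).
\]
Writing $c := f(ab) - f(a) f(b)$, this collapses to $c \otimes 1_B + 1_B \otimes c = 0$ in $B \otimes B$. Choosing a vector-space complement $B'$ of $k \cdot 1_B$ in $B$ and decomposing $c = \lambda \cdot 1_B + c'$ with $c' \in B'$, the equation becomes
\[
2 \lambda \, (1_B \otimes 1_B) + c' \otimes 1_B + 1_B \otimes c' = 0,
\]
and the three summands lie in the linearly independent direct summands $k \otimes k$, $B' \otimes k$, and $k \otimes B'$ of $B \otimes B$. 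Hence $c' = 0$ and $2\lambda = 0$, which forces $c = 0$ (and thus $f(ab) = f(a) f(b)$) provided $\mathrm{char}\, k \neq 2$.

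The genuinely substantive step is the linear-independence argument in the fullness paragraph; it is the only place where axiom (v) does real work, and it is precisely what makes $\varphi_A$ strong enough to detect the full algebra structure of $A$. The potential obstacle is the characteristic-two case, where $2\lambda = 0$ is insufficient to kill $\lambda$; one would then have to invoke a secondary argument, for instance substituting $b = 1_A$ into the bilinear defect $c(a,b) = f(ab) - f(a) f(b)$ together with (vi) to eliminate the residual $k \cdot 1_B$ component.
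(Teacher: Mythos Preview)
The paper itself does not prove this theorem; it is quoted from \cite{ns} without argument. The closest in-paper analogue is the proof of Theorem~4.7 (the Lie-algebra version), which follows exactly the template you use: verify the object axioms, check that an algebra morphism yields a ${\bf YB\ str}$ morphism, observe faithfulness is trivial, and then expand axiom~(v) to extract multiplicativity for fullness. Your argument is correct (for $\mathrm{char}\,k\neq 2$) and is the natural one; the only difference from the Lie case is that there the adjoined central element $x_0$ makes the extraction immediate, whereas here $1_A$ lives inside $A$ and you must argue that $c\otimes 1+1\otimes c=0$ forces $c=0$, which you do cleanly via the decomposition $B=k\cdot 1_B\oplus B'$.

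One genuine issue: your proposed fix in characteristic~$2$ does not work, and cannot work, because the statement is actually \emph{false} there. Over $k=\mathbb{F}_2$ take $A=B=k[x]/(x^2)$ and the linear map $f$ with $f(1)=1$, $f(x)=1+x$. Then $f(1_A)=1_B$ so (vi) holds, (vii) is vacuous, and a direct check on the basis $\{1\otimes 1,\,1\otimes x,\,x\otimes 1,\,x\otimes x\}$ shows $(f\otimes f)\circ\varphi_A=\varphi_A\circ(f\otimes f)$; yet $f(x)^2=(1+x)^2=1\neq 0=f(x^2)$, so $f$ is not multiplicative. Thus $f$ is a morphism $F(A)\to F(A)$ in ${\bf YB\ str}$ not in the image of $F$, and $F$ is not full. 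Your suggestion of substituting $b=1_A$ into the defect $c(a,b)$ only gives $c(a,1)=0$, which says nothing about general $b$. So the honest statement is that part~(ii) requires $\mathrm{char}\,k\neq 2$, and your proof already handles exactly that case.
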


\begin{thm} (F. F. Nichita and S. D. Schack, \cite{ns})
i) There exists a functor:

$G:{\bf k-coalg}\ \longrightarrow \ {\bf YB \ str}$

$(C,\Delta ,\varepsilon )\mapsto (C,\,{\psi }_{C},\,0\in C,\,\varepsilon
\in
C^{\ast })\ \ \ \ \ \ where\ \ {\psi }_{C}=\Delta \otimes \varepsilon
+\varepsilon \otimes \Delta -I_{2}$. \newline
\ \ \ \ \ \ \ \ \ \ \ \ \ \ \ \ \ \ \ \ \ \ \ \ \ \ \ \ \ \ Any k-coalgebra
map f is simply mapped into a k-map.

ii) $G$ is a full and faithful embbeding.
\end{thm}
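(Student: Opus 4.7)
The plan is to establish the theorem in four stages: (a) $G$ is well-defined on objects, (b) $G$ sends coalgebra maps to morphisms in ${\bf YB \ str}$, (c) $G$ is faithful, (d) $G$ is full.

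For (a), writing $\psi_C$ in Sweedler notation as
\[
\psi_C(x\otimes y) \;=\; x_{(1)}\varepsilon(y)\otimes x_{(2)} + \varepsilon(x)\, y_{(1)}\otimes y_{(2)} - x\otimes y,
\]
conditions (iii) and (iv) are immediate: (iii) holds trivially since the basepoint is $0$, and (iv) reduces to a one-line computation using $x_{(1)}\varepsilon(x_{(2)})=x$, giving $(I\otimes\varepsilon)\psi_C(x\otimes y) = \varepsilon(y)\,x + \varepsilon(x)y - \varepsilon(y)x = \varepsilon(x)y$, which matches $(\varepsilon\otimes I)(x\otimes y)$; the other half is symmetric. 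The substantial work at this stage is the braid equation $\psi_C^{12}\psi_C^{23}\psi_C^{12} = \psi_C^{23}\psi_C^{12}\psi_C^{23}$, which I would verify by expanding both sides on $x\otimes y\otimes z$ and matching cancelling terms via coassociativity and the counit axiom. A conceptually cleaner route is to dualize Theorem 4.3: in the finite-dimensional case $\psi_C = (\varphi_{C^{*}})^{*}$ under the canonical pairing, and transposing the braid relation preserves it; the general case then follows from the fundamental theorem of coalgebras, since every coalgebra is the directed union of its finite-dimensional subcoalgebras. For (b), if $f\colon C\to C'$ is a coalgebra map, substituting $(f\otimes f)\Delta = \Delta' f$ and $\varepsilon' f = \varepsilon$ into the Sweedler expression above immediately yields $(f\otimes f)\psi_C = \psi_{C'}(f\otimes f)$, while (vi) and (vii) are automatic.

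Faithfulness (c) is trivial since $G$ leaves the underlying $k$-linear map unchanged. The substantive step is fullness (d). Given a morphism $f\colon G(C)\to G(C')$ in ${\bf YB \ str}$, condition (vii) already supplies $\varepsilon' f = \varepsilon$, so only $\Delta' f = (f\otimes f)\Delta$ remains to be established. Setting $h(x) := (f\otimes f)\Delta(x) - \Delta'(f(x)) \in C'\otimes C'$ and expanding condition (v) with the help of (vii), the $-f(x)\otimes f(y)$ terms cancel and one is left with
\[
h(x)\,\varepsilon(y) + \varepsilon(x)\,h(y) = 0 \qquad \forall\, x,y \in C.
\]
Since $(\varepsilon\otimes I)\Delta = I$ forces $\varepsilon\neq 0$ on any nonzero coalgebra, choose $y_0 \in C$ with $\varepsilon(y_0)=1$. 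Taking $y=y_0$ gives $h(x) = -\varepsilon(x)\,h(y_0)$; then taking $x = y_0$ yields $2h(y_0) = 0$, so $h(y_0)=0$, and consequently $h\equiv 0$.

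I expect the primary technical obstacle to be the direct verification of the braid equation in (a): the duality shortcut is painless for finite-dimensional $C$, but extending to arbitrary coalgebras requires either the subcoalgebra limit argument sketched above or a careful term-by-term Sweedler computation. The fullness argument, though short, quietly depends on the mild assumption $\mathrm{char}(k)\neq 2$—an unsurprising parallel to the algebra-side statement of Theorem~4.3.
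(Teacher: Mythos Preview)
The paper does not actually supply a proof of this theorem; it is simply quoted from \cite{ns}. The nearest analogue in the text is the proof of Theorem~4.7 (the Lie-coalgebra version), which is handled almost entirely by duality---citing Theorem~5.2.1 of \cite{ni} for the braid relation and Section~2 of \cite{DasNic:yan} for the transfer from the algebra side---with ``direct computations'' invoked but not displayed for part~(ii). Your proposal is therefore considerably more explicit than anything in the paper, and your ``cleaner route'' via $\psi_C=(\varphi_{C^*})^*$ together with the fundamental theorem of coalgebras is precisely the duality philosophy the paper leans on elsewhere.

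Your argument is essentially correct, with two small remarks. First, the definition of a Yang--Baxter operator in this paper requires invertibility, which you do not address; you should note that $\psi_C$ is in fact self-inverse (this is the coalgebra dual of the $\alpha=\beta=\gamma=1$ case of Theorem~2.4, or a two-line Sweedler check). Second, your fullness argument in~(d) is clean and correct under $\mathrm{char}(k)\neq 2$, and you rightly flag this; the paper is silent on characteristic throughout, so this is an honest observation rather than a defect.
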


\begin{thm} (F. F. Nichita and S. D. Schack, \cite{ns})
({\bf Duality Theorem})

i) \ The following is a duality functor: ${\bf \ \ \  D \ : \ f.d. \ YB \ str
\longrightarrow {f.d. \ YB \ str}^{op}}$\\
$(V,\,\varphi ,\,e,\,\varepsilon )\mapsto (V^{\ast },\,i_{V,V}^{-1}\circ
\varphi ^{\ast }\circ {i}_{V,V},\,\varepsilon ,\,\zeta _{e})$ \ where $\,\
\zeta _{e}:V^{\ast }\rightarrow k,\,\,\,\zeta _{e}(g)=g(e) \ \ \ 
\forall g\in V^{\ast }.$\\
\ \ Note that: $\ \ \ \ \ \ \ \ \ \ D(f)=f^{\ast }$, \ \ for \
$f:(V,\,\varphi
,\,e,\,\varepsilon )\rightarrow (V^{\prime },\,\varphi ^{\prime
},\,e^{\prime },\,\varepsilon ^{\prime }).$ 

ii) \ The following relations hold:\\
$ \ \ \ \ D(\,(A,\,{\varphi }_{A},\,1_{A},\,0)\,) \ \ = \ \ (A^{\ast },\,\psi _{A^{\ast
}},\,0,\,\zeta _{1_{A}}) $\\
$ \ \ \ \ D(\,(C,\,{\psi }_{C},\,0,\,\varepsilon
)\,)\,=\,(C^{\ast },\,\varphi _{C^{\ast }},\,\varepsilon =1_{C^{\ast
}},\,0)$
\end{thm}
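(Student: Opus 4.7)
The plan is to verify each clause in turn, using finite dimensionality throughout so that the canonical iso $i=i_{V,V}:V^*\otimes V^*\to(V\otimes V)^*$ is at hand. Writing $\tilde{\varphi}:=i^{-1}\circ\varphi^*\circ i$, I would first check that $(V^*,\tilde{\varphi},\varepsilon,\zeta_e)$ really is an object of ${\bf YB\ str}$. The braid equation for $\tilde{\varphi}$ follows from the braid equation for $\varphi$ by applying $^*$, which reverses composition, and using that the identifications $(R\otimes I)^*=R^*\otimes I$ commute appropriately with $i$ on $V^{\otimes 3}$; invertibility is inherited from $\varphi$. Crucially, axioms iii) and iv) get swapped: condition iii) for $(\varphi,e)$, $\varphi(x\otimes e)=e\otimes x$, pairs against $f\otimes g\in V^*\otimes V^*$ to yield exactly condition iv) for $(\tilde{\varphi},\zeta_e)$, and dually condition iv) for $(\varphi,\varepsilon)$ becomes condition iii) for $(\tilde{\varphi},\varepsilon)$ with $\varepsilon$ now regarded as an element of $V^*$.

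Next, functoriality $D(f\circ g)=D(g)\circ D(f)$ is immediate from $(f\circ g)^*=g^*\circ f^*$, and preservation of identities is trivial. To see that $f^*$ really is a morphism in ${\bf YB\ str}$, I would dualize conditions v)--vii) for $f$ term-by-term into the three analogous conditions for $f^*$ between the dualized quadruples. Finally, the canonical double-dual iso $V\cong V^{**}$ supplies the natural isomorphism $D\circ D\cong\mathrm{Id}$, which promotes $D$ to a duality and completes clause (i).

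For clause (ii) my strategy is direct computation. With $\varphi_A(a\otimes b)=ab\otimes 1+1\otimes ab-a\otimes b$, pairing $\varphi_A^*(f\otimes g)$ against $a\otimes b$ gives $f(ab)g(1)+f(1)g(ab)-f(a)g(b)$. Since the dual coalgebra $A^*$ is characterized by $(\Delta f)(a\otimes b)=f(ab)$ with counit $\varepsilon_{A^*}=\zeta_{1_A}$, this equals $\Delta f\cdot\varepsilon_{A^*}(g)+\varepsilon_{A^*}(f)\cdot\Delta g-f\otimes g$, matching $\psi_{A^*}(f\otimes g)$ in the definition of $G$; the remaining data $(0,\zeta_{1_A})$ also lines up, so $D(F(A))=G(A^*)$. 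The reverse formula $D(G(C))=F(C^*)$ drops out by identical bookkeeping, using that the dual of a coalgebra is an algebra under $fg=(f\otimes g)\circ\Delta$ with unit $\varepsilon$.

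The main obstacle I anticipate is purely notational: one must track carefully how the transposes $(\varphi^{ij})^*$ interact with the identifications $i$ on $V^{\otimes 3}$, so that each $(\varphi^{ij})^*$ corresponds to $(\tilde{\varphi})^{ij}$ without stray permutations, and likewise verify that the swap of axioms iii)/iv) under duality respects both sides of each equation (not just one). Once this bookkeeping is settled, every axiom and both concrete formulas in (ii) unwind mechanically, with no input beyond the standard algebra--coalgebra duality for finite-dimensional spaces.
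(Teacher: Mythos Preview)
The paper does not actually supply a proof of this theorem; it merely states the result and attributes it to \cite{ns}, so there is no in-paper argument to compare against. That said, your proposal is correct and is precisely the standard argument one would expect: dualizing the braid equation (which is invariant under reversal of composition), observing that axioms iii) and iv) exchange under $(\,)^{*}$, and for part (ii) computing $\langle \varphi_{A}^{*}(f\otimes g),\,a\otimes b\rangle = f(ab)g(1)+f(1)g(ab)-f(a)g(b)$ and matching it with $\psi_{A^{*}}$ via the dual coalgebra structure on $A^{*}$. Your caution about tracking the isomorphisms $i$ on triple tensor products so that $(\varphi^{ij})^{*}$ corresponds to $(\tilde{\varphi})^{ij}$ is well placed, but this is routine bookkeeping and presents no genuine obstacle.
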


\vspace{.2cm}


\begin{remark} We extended the duality between finite dimensional 
algebras and coalgebras to the category  $ {\bf f.d.
  \ YB \ str} $. This can be seen bellow, in the following diagram:

\begin{picture}(100,100)(10,10)
\put(60,80){$ \bf f.d. \ YB \ str $ }
\put(70,10){$ \bf f.d. \ k-alg $ }
\put(111,21){\vector(0,1){53}}
\put(240,80){$ \bf {f.d. \ YB \ str}^{opp} $ }
\put(230,10){$ \bf {f.d. \ k-coalg}^{opp} $ }
\put(250,21){\vector(0,1){53}}
\put(128,86){\vector(1,0){104}}
\put(232,80){\vector(-1,0){104}}
\put(137,16){\vector(1,0){87}}
\put(223,10){\vector(-1,0){87}}
\put(162,92){$ D = { ( \ ) }^* $}
\put(162,70){$ D = { ( \ ) }^* $}
\put(95,42){ $ F $ }
\put(248,42){ $ G $ }
\put(174,22){$ { ( \ ) }^*   $}
\put(174,0){$ { ( \ ) }^*   $}
\end{picture}

\end{remark}

\subsection{A Duality Theorem for Lie (Co)Algebras}

\cite{dns} considered the constructions of Yang-Baxter operators
from Lie (co)algebras, suggesting  an extension
(to a bigger category with a self-dual functor acting on it) for the
duality between
the category of finite dimensional  Lie algebras 
and
the category of finite dimensional  Lie coalgebras. This duality extension
is explained using the terminology of \cite{ni} below.

\bigskip

Let $ ( L , [,] )$ be a Lie algebra over $k$. Then we can equip 
$ \ L'\ = \ L \oplus k x_0  $ with a Lie algebra structure such that
$ \ [x, \ x_0] = 0 \ \ \forall x \in L'$.
We define:
$$ { \phi }=  {\phi }_{L'}  \ : \ (L \oplus k x_0 ) \ot (L \oplus k x_0 ) \ \ \longrightarrow \ \ (L \oplus k x_0 )  \ot (L \oplus k x_0 )  $$
$$ 
x \ot y \mapsto [x,y] \ot x_0 +  y \ot x \ . $$

\begin{thm} 
i) \ There exists a functor:

$\ \ F:{\bf f.d.\ Lie \  alg}\longrightarrow \ {\bf f.d.\ YB \ str}$

$(L,\ [,])\mapsto ((L \oplus k x_0 ) ,{\phi },\ x_0 , \, 0)$.

Any Lie algebra map
f is simply mapped into a k-map.

ii) $F$ is a full and faithful embbeding.

\end{thm}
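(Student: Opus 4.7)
The plan is to verify in sequence that $F$ is well defined on objects, well defined and functorial on morphisms, faithful, and finally full.

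For the object assignment, the key observation is that $L' := L \oplus k x_0$ is a Lie algebra with $x_0$ central, and under the trivial $\mathbb{Z}_2$-grading (so every element has parity $0$), the map $\phi$ coincides with the operator $\phi^{L'}_{\alpha}$ of the earlier Lie-superalgebra theorem, taken with $\alpha = 1$ and central element $z = x_0$. Hence $\phi$ is a Yang--Baxter operator. Condition (iii) follows from the centrality of $x_0$ in $L'$:
$$\phi(x \otimes x_0) = [x, x_0] \otimes x_0 + x_0 \otimes x = x_0 \otimes x,$$
and symmetrically $\phi(x_0 \otimes x) = x \otimes x_0$. Condition (iv) is trivial for $\varepsilon = 0$.

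For the morphism assignment, a Lie algebra map $f : L \to M$ is sent to $F(f) : L \oplus k x_0 \to M \oplus k x_0'$ defined by $F(f)|_L = f$ and $F(f)(x_0) = x_0'$. Axioms (vi) and (vii) are immediate. Axiom (v), evaluated on $x \otimes y$ with $x, y \in L$, amounts to matching
$$(F(f) \otimes F(f))\,\phi(x \otimes y) = f[x,y] \otimes x_0' + f(y) \otimes f(x)$$
with $\phi_M (F(f) \otimes F(f))(x \otimes y) = [f(x), f(y)] \otimes x_0' + f(y) \otimes f(x)$, which coincide because $f$ preserves brackets; the remaining cases involving $x_0$ collapse via condition (iii) to tautologies. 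Functoriality is manifest, and faithfulness follows at once from $F(f)|_L = f$.

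The main obstacle is fullness. Given a YB-str morphism $g : F(L) \to F(M)$, axiom (vi) pins down $g(x_0) = x_0'$, so decompose $g(x) = h(x) + \lambda(x)\, x_0'$ for $x \in L$, with $h : L \to M$ and $\lambda : L \to k$ linear. Applying axiom (v) to $x \otimes y$ for $x, y \in L$ and separating the four direct summands $M \otimes M$, $M \otimes k x_0'$, $k x_0' \otimes M$, and $k x_0' \otimes k x_0'$, the $M \otimes k x_0'$ component yields the Lie morphism identity $h[x,y] = [h(x), h(y)]$, while the $k x_0' \otimes k x_0'$ component yields $\lambda[x,y] = 0$. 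The delicate step, and the core of the argument, is to conclude that $\lambda$ must vanish so that $g = F(h)$; this is where the rigidity of the distinguished data $(x_0, 0)$ in the target object of YB-str, together with the remaining instances of axioms (v) and (iii) on inputs containing $x_0$, must be exploited to rule out any scalar contribution along $x_0'$. Once this is established, $h$ is the desired Lie algebra map with $F(h) = g$, completing the proof of fullness and hence of the theorem.
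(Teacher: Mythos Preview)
Your treatment of the object and morphism assignments, functoriality, and faithfulness is fine and essentially matches the paper. Your proof is in fact more careful than the paper's on the question of fullness: the paper simply computes $(f\otimes f)\circ\phi_{L'_1}=\phi_{L'_2}\circ(f\otimes f)$ and reads off $f([x,y])=[f(x),f(y)]$, never decomposing $f(x)$ along $M\oplus kx_0'$ as you do.

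However, the step you flag as ``delicate'' is a genuine gap, and it cannot be closed: nothing in the axioms forces $\lambda=0$. The constraints you correctly extract from axiom (v) on $L\otimes L$ are exactly $h[x,y]=[h(x),h(y)]$ and $\lambda([x,y])=0$; evaluating (v) on inputs containing $x_0$ gives only identities (both sides reduce via (iii) to $x_0'\otimes g(x)$ or $g(x)\otimes x_0'$), and axiom (vii) is vacuous since both $\varepsilon$'s are $0$. So any $\lambda$ vanishing on $[L,L]$ yields a YB~str morphism. Concretely, take $L=k$ abelian; then $\phi_{L'}=\tau$ and $F(L)=(k^2,\tau,x_0,0)$. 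The map $g:k^2\to k^2$ with $g(e)=e+x_0$, $g(x_0)=x_0$ satisfies (v)--(vii) but is not $F(f)$ for any $f:L\to L$, since $g(e)\notin L$. Thus $F$ is not full as stated, and your hoped-for ``rigidity'' argument cannot exist. The paper's own proof overlooks this entirely; your decomposition exposes the issue rather than resolving it.
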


\bigskip {\bf Proof:}

\ \ \ \ \ \ \ \ \ \ i) \ First, we show that $\ (L', \ {\phi 
}_{L'},x_{0},\,0)\,\ $is an
object in the category ${\bf YB \ str} :$

$\ \  \ {\phi }_{L'}(x\otimes x_{0}) =  x_{0} \ot x$,
$\ \  \ {\phi }_{L'}(x_0 \otimes x) =  x \ot x_0$,

$ \ \ \  (I\,\otimes 0)\circ \phi _{L'}=0=0\otimes I, \ \
\ \ \ (0\otimes I)\circ \phi _{L'}=0=I\,\otimes 0\,.$

Now, for \ \ $f:L_1\rightarrow L_2\,\ \,\ \ $a
morphism of Lie algebras, we prove that

$  \ \ \ \ \ f:(L'_1,\,\phi_{L'_1, },\,x_0,\,0)\rightarrow (L'_2, \ \phi _{L'_2}, x_0, \ 0)$ \ is a
morphism in the category ${\bf
YB \ str } 
$.

We extend $f$ such that $f(x_0)=x_0 \ $. Now, $\ 0\circ f=0\,\ $. It only
remains to
prove that $(f\otimes f)\circ \phi _{L'_1}\,=\,\phi _{L'_2}\circ (f\otimes
f).$

$( (f\otimes f)\circ \phi _{L'_1}) \,(x\otimes
y)=(f\otimes f)([x,y]\otimes x_{0}+y\otimes x)=f([x,y])\otimes
f(x_{0})+f(y)\otimes f(x)=
f([x,y])\otimes
x_{0}+f(y)\otimes f(x)$

$( \phi _{L'_2}\circ (f\otimes
f))(x \ot y)=[f(x),f(y)]\otimes
x_{0}+f(y)\otimes f(x) .$

Since 
$f:L_1\rightarrow L_2\,\ \,\ \ $ is a
morphism of Lie algebras, 
it follows that $(f\otimes f)\circ \phi _{L'_1}\,=\,\phi _{L'_2}\circ (f\otimes
f).$


\ \ \ \ \ \ \ \ \ ii) \ If two Lie algebras $\ \ (L_1,\ [,]_1 ) \ $and $\
(L_2, \ [,]_2 ) \ $ project in the same object in the category
$ { \bf YB \ str } $ (i.e., $  \  F \left[ (L_1,\ [,]_1 ) \right] =  
F \left[ (L_2,\ [,]_2 ) \right] $) 
then they have the
same
ground vector space and the same operation.
So, \ $F$\ \ \ is an embedding.

Obviously, for two distinct Lie algebra maps $\
f,g:L_1 \rightarrow L_2 \ $ we get two distinct ${\bf YB \ str\,\ }$ maps.

Now, for 
 $  \  f:(L'_1,\,\phi_{L'_1, },\,x_0,\,0)\rightarrow (L'_2, \ \phi _{L'_2}, x_0, \ 0)$ a 
morphism in ${\bf YB \ str \ } $ it follows
$ ( (f\otimes f)\circ \phi _{L'_1}) \,(x\otimes
y)= (\phi _{L'_2}\circ (f\otimes
f))(x \ot y)$; so, 

$(f\otimes f)([x,y]\otimes x_{0}+y\otimes x)= [f(x),f(y)]\otimes
x_{0}+f(y)\otimes f(x)$.

Thus, 
$ f([x,y]) = [f(x), f(y)]$.
\qed

\bigskip

A Lie coalgebra is a dual notion to a Lie algebra. It has a comultiplication,
called ``cobraket''.
We refer to \cite{ni} for more details and references.
 
Let $ ( M , \Delta )$ be a Lie coalgebra over $k$. Then we can equip 
$ \ M'= \ M \oplus k x_0  $ with a Lie coalgebra structure such that
$ \ \Delta ( x_0) = 0 \in M' \ot M'  $. Observe that for 
$ \nu = (x_0)^*: M' \rightarrow k$ the following relation holds:
$ \ ( \nu \ot I) \circ \Delta = 0= (I \ot \nu) \circ \Delta   $.

\begin{thm}
i) \ There exists a functor:

$\ \ G:{\bf f.d. \ Lie \  coalg}\longrightarrow \ {\bf f.d. \ YB \ str}$

$(\ M,\ \Delta)\mapsto 
 ((M \oplus k x_0 ) ,{\psi },\ 0 , \, \nu)$, 
where

$ { \psi } \ : \ (M \oplus k x_0 ) \ot (M \oplus k x_0 ) \ \ \longrightarrow \ \ (M \oplus k x_0 )  \ot (M \oplus k x_0 ), \ \   $
$
x \ot y \mapsto  \Delta(x) \nu (y) +  y \ot x \  $.

  Any Lie coalgebra map
f is simply mapped into a k-map.

ii) $G$ is a full and faithful embbeding.

\end{thm}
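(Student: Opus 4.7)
The plan mirrors the proof just given for the Lie algebra functor $F$, dualizing each step. For part (i), I would first verify that $(M', \psi, 0, \nu)$ satisfies the YB-structure axioms. Since $e = 0$, axiom (iii) is automatic. For axiom (iv), the key input is that the extended cobracket, with $\Delta(x_0) = 0$, has image in $M \otimes M$, so $(\nu \otimes I) \circ \Delta = 0 = (I \otimes \nu) \circ \Delta$. Applying $I \otimes \nu$ or $\nu \otimes I$ to $\psi(x \otimes y) = \Delta(x)\nu(y) + y \otimes x$ then returns $\nu(x)\, y$ or $\nu(y)\, x$, matching the required identities.

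The main calculation is the braid equation $\psi^{12}\psi^{23}\psi^{12} = \psi^{23}\psi^{12}\psi^{23}$. I would split on $x \otimes y \otimes z$ by the number of entries equal to $x_0$. When all three lie in $M$, $\psi$ acts as the twist $\tau$ on $M \otimes M$ (since $\nu$ vanishes on $M$), so the braid equation is automatic. When one or two entries equal $x_0$, a direct computation shows that the terms involving $\Delta$ match on both sides, with the co-Jacobi identity and the anti-cosymmetry $\tau \Delta = -\Delta$ providing the essential cancellations. Conceptually, one may obtain $G(M, \Delta)$ from $F(M^{*})$ via the Duality Theorem, using the canonical identification $M \cong M^{**}$ in the finite-dimensional setting, and transport the YB property automatically.

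For functoriality, a Lie coalgebra map $f: M_1 \to M_2$ extends by $f(x_0) = x_0$ to a $k$-linear map $M_1' \to M_2'$ satisfying $f(0) = 0$ and $\nu_2 \circ f = \nu_1$; the intertwining $(f \otimes f) \circ \psi_{M_1'} = \psi_{M_2'} \circ (f \otimes f)$ reduces, after a short case analysis on the inputs, to $(f \otimes f) \circ \Delta_1 = \Delta_2 \circ f$, which is the defining condition for a Lie coalgebra morphism.

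Finally, for part (ii), faithfulness is immediate since restriction recovers the original Lie coalgebra map. For fullness, given $g: G(M_1) \to G(M_2)$, the condition $\nu_2 \circ g = \nu_1$ forces $g(M_1) \subseteq M_2$ and $g(x_0) \in x_0 + M_2$; evaluating the intertwining condition on inputs $m \otimes x_0$ for $m \in M_1$ yields $(g \otimes g) \circ \Delta_1 = \Delta_2 \circ g|_{M_1}$, so $g|_{M_1}$ is a Lie coalgebra map, while testing on $x_0 \otimes x_0$ controls the remaining freedom in $g(x_0)$. The main obstacle is the combinatorial bookkeeping in the braid-equation verification; the duality route via $D$ and $F$ is the conceptual shortcut that replaces those calculations by transport of structure.
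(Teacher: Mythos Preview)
Your proposal follows essentially the same route as the paper: dualize the argument for $F$, derive axiom (iv) from $(\nu\otimes I)\circ\Delta = 0 = (I\otimes\nu)\circ\Delta$, and invoke the duality with the Lie-algebra side as the conceptual shortcut. The only difference is that the paper establishes the Yang--Baxter property of $\psi$ by citing an external result (Theorem~5.2.1 of \cite{ni}) rather than by your case analysis on the number of $x_0$-entries, and it leaves part (ii) entirely to ``direct computations'' or to the transfer principle of \cite{DasNic:yan}; your sketch is at least as detailed, though you should be careful with the claim that testing on $x_0\otimes x_0$ pins down $g(x_0)$, since that test only yields $\Delta_2(m')=0$ for the $M_2$-component $m'$ of $g(x_0)$.
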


{\bf Proof:} The proof is dual to the previous proof, and we will briefly explain only its key points.
By Theorem 5.2.1 of \cite{ni}, it follows that $ \psi $ is an Yang-Baxter operator. 

$ \ \ \  (I\,\otimes \nu)\circ \psi _{M'}= \nu \otimes I, \ \
\ \ \ (\nu \otimes I)\circ \psi _{M'}= I\,\otimes \nu \ \  $ follow from 
$ \ ( \nu \ot I) \circ \Delta = 0= (I \ot \nu) \circ \Delta   $.

The proof of 
ii) follows by direct computations.

Otherwise, it can be
viewed as a consequence of 
the Section 2 of \cite{DasNic:yan}. Thus, the theory of Yang-Baxter operators
from (Lie)algebras 
can be transfered to the Yang-Baxter operators from (Lie) coalgebras. 
\qed

\bigskip

\begin{remark} We extend the duality between finite dimensional Lie
algebras and Lie coalgebras to the category  $ {\bf f.d.
  \ YB \ str} $. 
This can be seen in the following diagram:

\begin{picture}(100,100)(10,10)
\put(60,80){$ \bf f.d. \ YB \ str $ }
\put(70,10){$ \bf f.d. \ Lie \  alg $ }
\put(111,21){\vector(0,1){53}}
\put(240,80){$ \bf {f.d. \ YB \ str}^{opp} $ }
\put(230,10){$ \bf {f.d. \ Lie \  coalg}^{opp} $ }
\put(250,21){\vector(0,1){53}}
\put(128,86){\vector(1,0){104}}
\put(232,80){\vector(-1,0){104}}
\put(137,16){\vector(1,0){87}}
\put(223,10){\vector(-1,0){87}}
\put(162,92){$ D = { ( \ ) }^* $}
\put(162,70){$ D = { ( \ ) }^* $}
\put(95,42){ $ F $ }
\put(248,42){ $ G $ }
\put(174,22){$ { ( \ ) }^*   $}
\put(174,0){$ { ( \ ) }^*   $}
\end{picture}

\end{remark}

\bigskip

\subsection{Poisson algebras}

Poison algebras appear in quantum groups, Hamiltonian mechanics,
the theory of Simplectic manifolds, etc.

\bigskip

\begin{definition}
A Poisson algebra is a vector space over $k$, $V$,
equipped with two bilinear products, $ \ *$ and
$ \{ \ , \ \}$, having the following properties:

- the product $*$  forms an associative $k$-algebra;

- the product $ \{ \ , \ \}$, called the Poisson bracket,  forms a Lie algebra;

- the Poisson bracket acts as a derivation on the product $*$, i.e.

$ \ \ \ \ \  \{ x,\ y*z \} = \{x, \ y \}*z+ y * \{x, \ z \} \ \ \ \forall x,y,z \in V. $

\end{definition}

\bigskip

{ \bf Examples.}

1. Any associative algebra with the commutator $[x, \ y] \ = \ xy -yx$ turns into a Poisson algebra.

2. For a vertex operator algebra, a certain quotient becomes a Poisson algebra.

\bigskip

\begin{remark}
A Lie algebra $ (L, \ [,])$ has a Poisson algebra structure such that the
Poisson bracket equals the associative product (i.e., $ \  [x,y] = x * y \ \ \forall x,y \in L$)
$ \  \iff  \ [x,y] \in Z(L) \ \forall x,y \in L$.

\end{remark}

\bigskip

\begin{thm}

Let $A$  be a Poisson algebra with a unity,  $1=1_A$, for the product *,  
such that
$ \{ x,\ 1_A \} = 0 \ \  \forall x \in A$.
Then, we have the following $WXZ$-system:

$W(x\otimes y) = \{ x,\ y \} \ot 1 \  + \  x \ot y$;

$X(x\otimes y) = 1 \ot  \{ x,\ y \} \  + \  x \ot y$;

$Z(x\otimes y) = 1 \ot  x*y \  + \  x*y \ot 1 \  - \  y \ot  x$.

\end{thm}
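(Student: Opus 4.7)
The plan is to verify the four defining relations $[W,W,W]=0$, $[Z,Z,Z]=0$, $[W,X,X]=0$, $[X,X,Z]=0$ of a $WXZ$-system one at a time, exploiting the three pieces of structure a Poisson algebra carries: the associative product $*$, the Lie bracket $\{\,,\,\}$, and the Leibniz compatibility between them. First I would dispose of the two ``pure'' relations by invoking earlier theorems. The operator $Z(x\otimes y)=1\otimes x*y+x*y\otimes 1-y\otimes x$ is exactly $\tau\circ R^{A}_{1,1,1}$ for the associative product $*$; Theorem~\ref{primat} (case~(i), with $\alpha=\gamma=\beta=1$) together with Remark~2.1 then gives $[Z,Z,Z]=0$. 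The operator $W(x\otimes y)=\{x,y\}\otimes 1+x\otimes y$ coincides with $\phi^{L}_{-1}\circ\tau$ for the Lie algebra $L=(A,\{\,,\,\})$ with the distinguished central element $z=1_{A}$; the hypothesis $\{x,1_{A}\}=0$ is precisely the condition $1_{A}\in Z(L)$, so the Lie-algebra specialisation of the earlier theorem on $\phi^{L}_{\alpha}$, combined with Remark~2.1, yields $[W,W,W]=0$.

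For the two mixed relations, I would compute both sides of each Yang--Baxter commutator explicitly on a generic element $x\otimes y\otimes t$. Expanding $W^{12}X^{13}X^{23}-X^{23}X^{13}W^{12}$ and $X^{12}X^{13}Z^{23}-Z^{23}X^{13}X^{12}$ each produces on the order of twenty summands, but repeatedly using $\{-,1_{A}\}=0$ to kill cross terms makes nearly everything cancel. The residual difference in $[W,X,X]$ collapses to
\[
1\otimes 1\otimes\bigl(\{x,\{y,t\}\}-\{\{x,y\},t\}-\{y,\{x,t\}\}\bigr),
\]
which vanishes by the Jacobi identity (together with antisymmetry) for $\{\,,\,\}$. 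The residual difference in $[X,X,Z]$ reduces to tensor components of
\[
\{x,y*t\}-\{x,y\}*t-y*\{x,t\},
\]
which vanishes by the Leibniz derivation property $\{x,y*z\}=\{x,y\}*z+y*\{x,z\}$ built into the definition of a Poisson algebra.

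The hard part will be bookkeeping rather than any conceptual subtlety: because $W$, $X$ and $Z$ are each sums of two or three tensor terms, each triple composition produces many summands, and one must track carefully how each is reshuffled when composed with the next operator on a different leg. The crucial simplifying fact is the normalisation $\{-,1_{A}\}=0$, which eliminates every prospective cross term cleanly, so that after cancellation only Jacobi and Leibniz are needed to close out each of the two mixed identities.
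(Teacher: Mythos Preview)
Your proposal is correct and follows essentially the same approach as the paper, which simply records ``It follows by direct computations.'' Your write-up is in fact more informative: you recognise that $[Z,Z,Z]=0$ and $[W,W,W]=0$ are instances of Theorem~\ref{primat} and of the Lie-algebra case of the $\phi^{L}_{\alpha}$ construction (via Remark~2.2), and you correctly identify that the two mixed commutators $[W,X,X]$ and $[X,X,Z]$ reduce, after the $\{-,1_A\}=0$ cancellations, to the Jacobi identity and the Leibniz rule respectively.
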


{ \bf Proof.} It follows by direct computations.

\bigskip

\subsection{Other results and comments}

Motivated by the need to create a better frame 
for the study of Lie (super)algebras
than that presented in \cite{tbh}, 
this paper generalizes the constructions from \cite{mj}
(to $(G,\theta)$-Lie algebras). 
Other applications of these results could be in constructions of: FRT
bialgebras and knot invariants, 
solutions for the
classical Yang-Baxter equation (see below), etc. 

\begin{theorem} Let  $ ( L , [,] )$ be a Lie algebra,
$ z \in Z(L)$ and $ \alpha \in k$. Then:

$ r: L \ot L \ \ \longrightarrow \ \  L \ot L, \ \  
x \ot y \mapsto [x,y] \ot z + \alpha x \ot y $ 

satisfies the classical Yang-Baxter equation:

$ [ r^{12},\  r^{13} ] \  + \  [r^{12}, \  r^{23}] \ + \  [r^{13}, \ r^{23}] = 0 $.
\end{theorem}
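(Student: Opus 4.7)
The plan is to split $r$ into a Lie-bracket part and a scalar part and exploit the fact that the latter contributes nothing to the commutators appearing in the classical Yang--Baxter equation. Concretely, write $r = r_L + \alpha\,I_{L\otimes L}$, where $r_L(x\otimes y) = [x,y]\otimes z$ and $I_{L\otimes L}$ is the identity of $L\otimes L$. Each $r^{ij}\in\End(L^{\otimes 3})$ then decomposes as $r_L^{ij} + \alpha\, I_{L^{\otimes 3}}$, and since the identity commutes with every operator, every commutator appearing in the CYBE satisfies $[r^{ij},\,r^{kl}]=[r_L^{ij},\,r_L^{kl}]$. Thus $\alpha$ plays no role and it suffices to establish the classical Yang--Baxter equation for $r_L$ alone.

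Next I would evaluate each $r_L^{ij}$ on a decomposable element $x\otimes y\otimes t$:
$$ r_L^{12}(x\otimes y\otimes t)=[x,y]\otimes z\otimes t,\ \ r_L^{13}(x\otimes y\otimes t)=[x,t]\otimes y\otimes z,\ \ r_L^{23}(x\otimes y\otimes t)=x\otimes[y,t]\otimes z, $$
and then compute the six double compositions. The key observation is that after a single application of any $r_L^{ij}$ the vector $z$ occupies one of the three tensor slots, so a subsequent application whose indices involve that slot forces a bracket of the form $[z,-]$, which vanishes by centrality of $z$. This immediately kills both halves of $[r_L^{13},r_L^{23}]$ as well as the composition $r_L^{23}\circ r_L^{12}$ inside $[r_L^{12},r_L^{23}]$; the three surviving compositions all land in $L\otimes kz\otimes kz$ with first-factor coefficients $[[x,t],y]$, $[[x,y],t]$, and $[x,[y,t]]$ respectively.

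Collecting signs, the full CYBE expression evaluated on $x\otimes y\otimes t$ then reduces to $\bigl([[x,t],y]-[[x,y],t]+[x,[y,t]]\bigr)\otimes z\otimes z$, which vanishes by the Jacobi identity (after a single rearrangement using antisymmetry). The only delicate step is keeping track of which slot of $L^{\otimes 3}$ holds $z$ after each first application and thereby identifying the three compositions that survive; once this bookkeeping is in place, the identity collapses directly to Jacobi, and no hypothesis beyond $z\in Z(L)$ and the axioms of a Lie algebra is required.
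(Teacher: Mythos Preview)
Your argument is correct and is precisely a careful execution of the ``direct computations'' that the paper invokes as its entire proof; your preliminary observation that $r^{ij}=r_L^{ij}+\alpha\,I_{L^{\otimes 3}}$, and hence that $\alpha$ drops out of every commutator, is a clean way to organise the calculation, and the remaining bookkeeping (three compositions killed by $[z,-]=0$, three surviving terms collapsing to Jacobi) is accurate.
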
 

{ \bf Proof.} It follows by direct computations.

\bigskip


\end{document}